\newtheorem{theorem}{Theorem}
\newtheorem{proposition}{Proposition}
\newtheorem{lemma}{Lemma}
\newtheorem{definition}{Definition}
\newtheorem{corollary}{Corollary}
\newtheorem{remark}{Remark}
\title{Homogeneous spaces  of real simple Lie groups with proper actions of non virtually abelian discrete subgroups: a calculational approach}
\author{Maciej Boche\'nski, Piotr Jastrz\c ebski and  Aleksy Tralle}
\begin{document}

\maketitle

\abstract{Let $G$ be a simple non-compact linear connected Lie group  and $H\subset G$ be a closed  non-compact semisimple subgroup.  We are intersted in finding classes of homogeneous spaces $G/H$ admitting proper actions of discrete non-virtually abelian subgroups $\Gamma\subset G$.  We develop an algorithm for finding such homogeneous spaces. As a testing example we obtain a list of all non-compact homogeneous spaces $G/H$ admitting proper action of a discrete and non virtually abelian subgroup $\Gamma \subset G$ in the case when $G$ has rank at most 8, and $H$ is a maximal proper semisimple subgroup.}

\noindent {\it Keywords:} proper actions,  semisimple algebras, Clifford-Klein forms.
\vskip6pt
\noindent {\it AMS Subject Classification:} 57S30, 17B20, 22F30, 22E40, 65-05, 65F

\section{Introduction}\label{sec:intro}

A group is called non-virtually abelian if it has no finite index abelian subgroups. Let $G$ be a real simple linear non-compact  Lie group and let $H\subset G$ be a proper closed non-compact  semisimple subgroup. In this paper we are interested in a problem of finding homogeneous spaces  $G/H$, which admit a proper action of non virtually abelian discrete subgroups $\Gamma\subset G$. We create a procedure of generating some homogeneous spaces with the aformentioned property. As a testing example,  we give a complete list of non-compact homogeneous spaces $G/H$ which admit a proper action of a discrete subgroup $\Gamma\subset G$ which is non-virtually abelian when $H$ is a maximal proper semisimple subgroup and the rank of $G$ is $\leq 8$.

Let us recall the definition of a proper group action.
Let $L$ be a locally compact topological group acting continuously on a locally Hausdorff topological space $X$. This action is {\it proper} if for every compact subset $S \subset X$ the set
$$L_{S}:=\{  g\in L | g\cdot S \cap S \neq \emptyset \}$$
is compact.

%If $\Gamma \subset G$ is a discrete subgroup acting properly and freely on a homogeneous space $G/H$ then $\Gamma \backslash G/H$ is a smooth manifold. Therefore any invariant geometric structure (symplectic form, pseudo-Riemannian metric, etc.) on $G/H$ descends onto $\Gamma \backslash G/H.$ 

If there exists a discrete $\Gamma\subset G$ acting properly on $G/H$, we say that $G/H$ admits a Clifford-Klein form. If $\Gamma\backslash G/H$ is compact, we say that $G/H$ admits a compact Clifford-Klein form.  The problem of finding an appropriate discrete subgroup is straightforward when  $H$ is compact. In this case any torsion-free discrete subgroup of $G$ acts properly on $G/H$. On the other hand if $H$ is non-compact then it may happen that no infinite discrete subgroup of G acts properly  on $G/H.$ In more detail, if $\text{rank}_{\mathbb{R}}G=\text{rank}_{\mathbb{R}}H$ then only finite groups can act properly on $G/H$  (this  is  the Calabi-Markus phenomenon \cite{ko}, \cite{K89}).

\noindent
Moreover there are spaces which only admit a proper action of virtually abelian discrete subgroups (see \cite{ben}), for instance $SL(2n+1,\mathbb{R})/SL(2n,\mathbb{R})$.  Benoist \cite{ben} has found a criterion of the existence of a proper actions of a non-virtually abelian discrete subgroup of $G$ on $G/H$ expressed in terms of the Lie algebra data. This criterion yielded explicit examples of $G/H$ which admit and which do not admit such actions. Using the results of Benoist and T. Kobayashi, and a classification of nilpotent orbits, Okuda \cite{ok} classified irreducible symmetric spaces admitting proper action of non-virtually abelian discrete subgroups.

\noindent
Motivated by these results, the first and the third named author of this article proposed in \cite{bt} a definition of an a-hyperbolic rank of a reductive real Lie group. Using the a-hyperbolic ranks one can formulate conditions (similar to the Calabi-Markus phenomenon) under which the space $G/H$ admits, or does not admit proper actions of non-virtually abelian discrete subgroups (\cite{bt}, Theorem \ref{th8}). In \cite{btjo} other examples of homogeneous spaces $G/H$ admitting proper actions of non-virtually abelian discrete subgroups were given.
 
As explained in \cite{kod}, Section 3.8, in general the smaller the non-compact part of $H$, the more possibilities for discrete subgroups $\Gamma\subset G$ proper actions on $G/H$ exist. Therefore in this paper we investigate the extreme case, that is, when $H$ is a maximal proper subgroup of $G.$ One expects that in this situation the existence of an appropriate $\Gamma$ is the most difficult to obtain.

Since the problem of finding discrete subgroups of $G$ acting properly on $G/H$ is difficult, it is tempting to take another route in a more algorithmic fashion. T. Kobayashi found a criterion of properness of the action of reductive Lie subgroup $L\subset G$ on a homogeneous space $G/H$ expressed in purely Lie-theoretic terms \cite{K89}. We present this result as Theorem \ref{critkob}. Note that if $\Gamma$ is a  lattice in $L$, then it also acts properly on $G/H$. However, the problem of classifying  triples $(G,H,L)$ satisfying the criterion \cite{K89} is far from being solved, although there are numerous examples of both types, satisfying and not satisfying it.  Since a general description of such triples $(G,H,L)$ is not well understood, it is natural to take up  a computer aided investigation of various aspects of  proper actions of Lie groups on homogeneous spaces. In this direction we developed several numerical procedures. For example, in \cite{bjt} we proved that there are no compact Clifford-Klein forms of exceptional Lie groups. In \cite{bjstw} we proposed an algorithm for checking an obstruction to the existence of compact Clifford-Klein forms.   Looking at the problem from this angle we keep in mind that there are algorithms for various calculations in semisimple Lie algebras designed by de Graaf et. al., which yield methods of construction of semisimple subalgebras in simple complex Lie algebras \cite{dg1}, in real Lie algebras \cite{fd}, calculations with the real  Weyl groups \cite{ddg}, and the classification of real semisimple subalgebras in simple real Lie algebras of rank $\leq 8$ \cite{dg}. It should be mentioned that these algorithms were implemented (see packages \cite{gap} and \cite{sla}). 
%Thus, our ultimate goal is to develop algorithms for creating triples $(\mathfrak{g},\mathfrak{h},\mathfrak{l})$ which satisfy , or do not satisfy Kobayashi's criterion. 
In this article we develop an algorithm which checks the Kobayashi's criterion for the case when the real rank of $L$ is $1$ and $\mathfrak{g}, \mathfrak{l}$ are split Lie algebras (Theorem \ref{thm:procedure-split}). We apply our algorithms to the database \cite{dg} and obtain the complete description of homogeneous spaces $G/H$ of simple real linear Lie group $G$ of rank $\leq 8$ determined by maximal semisimple subgroups $H$ in $G$ (Theorem \ref{thm:maximal8}) such that $G/H$ admits proper action of a discrete non-virtually abelian subgroup of $G$. We implement our algorithms using packages \cite{gap},\cite{corelg} and \cite{sla}.

\noindent {\bf Acknowledgment}. We thank Willem de Graaf for discussions  and answering our questions. The first named author acknowledges the support of the National Science Center, Poland (grant NCN no. 2018/31/D/ST1/00083). The third named author was supported by National Science Center, Poland (grant  2018/31/B/ST1/00053).

\section{Preliminaries}
Our basic references for  the Lie theory are \cite{ov2} and \cite{OV}. We consider real and complex Lie algebras and we write $\mathfrak{g}^c$ for a complex Lie algebra and $\mathfrak{g}$ for the real form of $\mathfrak{g}^c$. We say that $\mathfrak{h}$ is a maximal subalgebra in $\mathfrak{g}$ if $\mathfrak{h}^c$ is maximal and proper in $\mathfrak{g}^c$ (with respect to inclusion). In general, it may happen that there is a subalgebra $\mathfrak{h}\subset\mathfrak{g}$ which is maximal in $\mathfrak{g}$ but $\mathfrak{h}^c$ is not maximal in $\mathfrak{g}^c$. Therefore, we restrict ourselves to a smaller class of inclusions of Lie algebras. Now we just fix some notation used throughout the article and recall the notion of the Satake diagram.  Let $\mathfrak{g}$ be a semisimple real Lie algebra. There is a Cartan decomposition $\mathfrak{g}=\mathfrak{k}\oplus\mathfrak{p}$ and the corresponding Cartan involution $\theta$. Fix a split Cartan subalgebra $\mathfrak{j}=\mathfrak{t}\oplus\mathfrak{a}$, $\mathfrak{a}\subset\mathfrak{p}$ being a maximal abelian subalgebra in $\mathfrak{p}$. Then $\mathfrak{j}^c$ is a Cartan subalgebra in $\mathfrak{g}^c$. Take a root system $\Delta=\Delta(\mathfrak{g}^c,\mathfrak{j}^c)$ with respect to $\mathfrak{j}^c$ and define
$$\Delta_0=\{\alpha\in\Delta\,|\,\alpha|_{\mathfrak{a}}=0\}.$$
Put $\Delta_1=\Delta\setminus\Delta_0$ and choose a set of simple roots $\Pi$ in $\Delta$. Put $\Pi_0=\Pi\cap\Delta_0$ and $\Pi_1=\Delta_1\cap\Pi$. Define an involution $\sigma^*:\mathfrak{j}^*\rightarrow\mathfrak{j}^*$ by the formula
$(\sigma^*\mu)(x)=\overline{\mu(\sigma(x))}$. One can show that $\sigma^*$ determines an involution $\tilde\sigma$ on $\Pi_1$. Then one defines the Satake diagram as follows: one takes the Dynkin diagram for $\mathfrak{g}^c$ and paints vertices from $\Pi_0$ in black and from $\Pi_1$ in white. Then one joins by arrow white vertices transformed into each other by $\tilde\sigma$. Recall that Satake diagrams classify semisimple real Lie algebras up to isomorphism. The list of all Satake diagrams of real simple Lie algebras can be found in \cite{OV}.
 \begin{definition} {\rm A weighted Dynkin diagram of a vector $x\in\mathfrak{j}^c$ is a map $\psi_x:\Pi\rightarrow \mathbb{R}$ defined by $\psi_x(\alpha)=\alpha(x)$ for any $\alpha\in\Pi$.}
\end{definition}
\noindent We say that $\mathfrak{g}$ is split, if its Satake diagram coincides with the Dynkin diagram (thus, there are no black nodes). For example, $\mathfrak{sl}(n,\mathbb{R})$ is split.
\begin{definition}{\rm We say that a weighted Dynkin diagram $\psi_x$ matches the Satake diagram of $\mathfrak{g}$ if all black nodes have weights equal to $0$ in $\psi_x$ and every two nodes joined by an arrow have the same weights.}
\end{definition}

\subsection{Proper actions and the a-hyperbolic rank}
We begin with two tools which often detect the existence or non-existence of proper actions  of reductive subgroups on a homogeneous space $G/H$ . The first one is Kobayashi's criterion of properness \cite{K89}, the second is an a-hyperbolic rank \cite{bt}.  Passing to the conjugates if necessary, we assume that there exists a Cartan involution $\theta$ of $\mathfrak{g}$ such that $\theta (\mathfrak{h})=\mathfrak{h}$ and $\theta (\mathfrak{l})=\mathfrak{l} .$ We get the following Cartan decompositions
$$\mathfrak{g} = \mathfrak{k} + \mathfrak{p}, \ \ \mathfrak{h} = \mathfrak{k}_{h} + \mathfrak{p}_{h},  \ \ \mathfrak{l} = \mathfrak{k}_{l} + \mathfrak{p}_{l}.$$
Let $\mathfrak{a} , \ \mathfrak{a}_{h} , \ \mathfrak{a}_{l}$ be maximal abelian subspaces in $\mathfrak{p}, \ \mathfrak{p}_{h}, \ \mathfrak{p}_{l} ,$ respectively. We may assume that $\mathfrak{a}_{h}, \mathfrak{a}_{l}\subset \mathfrak{a}.$ Also denote by $W(\mathfrak{a})$ the little Weyl group of $\mathfrak{g} .$
\begin{theorem}[\cite{K89}, Theorem 4.1]
The action of $L$ on a homogeneous space $G/H$  is proper if and only if
$$W(\mathfrak{a})\mathfrak{a}_{l}\cap \mathfrak{a}_{h} = \{  0 \} .$$
\label{critkob}
\end{theorem}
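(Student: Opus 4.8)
The plan is to follow Kobayashi and convert the topological properness condition into a combinatorial statement about Cartan projections. Fix a closed Weyl chamber $\mathfrak{a}^{+}\subset\mathfrak{a}$ for the restricted root system of $\mathfrak{g}$, so that the polar (Cartan) decomposition $G=K\exp(\mathfrak{a}^{+})K$ holds and produces a well-defined continuous \emph{Cartan projection} $\mu\colon G\to\mathfrak{a}^{+}$ characterized by $g\in K\exp(\mu(g))K$. Two properties of $\mu$ are the technical backbone: it is a \emph{proper} map (preimages of bounded sets are relatively compact), and it has \emph{bounded distortion}, i.e. for every compact $Q\subset G$ there is a constant $C_{Q}>0$ with $\|\mu(q_{1}gq_{2})-\mu(g)\|\le C_{Q}$ for all $q_{1},q_{2}\in Q$ and $g\in G$. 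Both follow from the $KAK$ decomposition together with the subadditivity estimates for $\mu$.

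First I would rephrase properness in group-theoretic terms. Since $\pi\colon G\to G/H$ admits local sections, every compact subset of $G/H$ has the form $\pi(Q)$ for some compact $Q\subset G$, and for such a set $L_{\pi(Q)}=\{\,l\in L:\ lQH\cap QH\neq\emptyset\,\}=L\cap QHQ^{-1}$. Hence the $L$-action on $G/H$ is proper if and only if $L\cap QHQ$ is relatively compact in $G$ for every compact $Q\subset G$ (enlarging $Q$ to absorb $Q^{-1}$). Now apply the two properties of $\mu$: if $l\in L\cap QHQ$, write $l=q_{1}hq_{2}$, so $\mu(l)\in\mu(L)$ and $\|\mu(l)-\mu(h)\|\le C_{Q}$, i.e. $\mu(l)$ lies in the $C_{Q}$-neighbourhood of $\mu(H)$; conversely, by properness of $\mu$, relative compactness of $L\cap QHQ$ is equivalent to boundedness of $\mu(L\cap QHQ)$. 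This yields the key reduction: the action is proper if and only if for every $R>0$ the set $\mu(L)\cap\bigl(\mu(H)+B_{R}\bigr)$ is bounded in $\mathfrak{a}^{+}$, where $B_{R}$ is the ball of radius $R$.

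Next I would identify $\mu(L)$ and $\mu(H)$ up to bounded Hausdorff distance. Since $\mathfrak{l}$ is reductive with $\mathfrak{k}_{l}\subset\mathfrak{k}$, $\mathfrak{a}_{l}\subset\mathfrak{a}$ and $K_{l}\subset K$, the decomposition $L=K_{l}\exp(\mathfrak{a}_{l}^{+})K_{l}$ gives $\mu(L)=\mu(\exp\mathfrak{a}_{l}^{+})$, and one checks this lies within bounded Hausdorff distance of $\bigl(W(\mathfrak{a})\cdot\mathfrak{a}_{l}\bigr)\cap\mathfrak{a}^{+}$; likewise $\mu(H)$ is within bounded Hausdorff distance of $\bigl(W(\mathfrak{a})\cdot\mathfrak{a}_{h}\bigr)\cap\mathfrak{a}^{+}$. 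Both sets are finite unions of closed polyhedral cones, and for such cones the property ``$C\cap(D+B_{R})$ bounded for all $R$'' is equivalent, by a rescaling argument on the unit sphere using that $C,D$ are closed and scale-invariant, to $C\cap D=\{0\}$. Finally a short linear-algebra argument with $W(\mathfrak{a})$ shows $\bigl((W(\mathfrak{a})\mathfrak{a}_{l})\cap\mathfrak{a}^{+}\bigr)\cap\bigl((W(\mathfrak{a})\mathfrak{a}_{h})\cap\mathfrak{a}^{+}\bigr)=\{0\}$ iff $W(\mathfrak{a})\mathfrak{a}_{l}\cap\mathfrak{a}_{h}=\{0\}$: given a nonzero $v$ in the left-hand intersection, a suitable $W(\mathfrak{a})$-translate of $v$ lies in $\mathfrak{a}_{h}$ while remaining in $W(\mathfrak{a})\mathfrak{a}_{l}$; conversely any nonzero element of $W(\mathfrak{a})\mathfrak{a}_{l}\cap\mathfrak{a}_{h}$ can be moved into $\mathfrak{a}^{+}$. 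Chaining these equivalences gives the theorem.

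I expect the main obstacle to be the second paragraph: making rigorous that properness is governed by the Cartan projection, which rests squarely on the bounded-distortion estimate $\|\mu(q_{1}gq_{2})-\mu(g)\|\le C_{Q}$ and on properness of $\mu$. The identification of $\mu(L)$ with a union of Weyl-chamber cones is also mildly delicate, since the little Weyl group of $\mathfrak{l}$ need not embed into $W(\mathfrak{a})$; this is precisely why one argues up to bounded Hausdorff distance rather than equality.
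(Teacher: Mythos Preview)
The paper does not supply its own proof of this statement: Theorem~\ref{critkob} is quoted verbatim from \cite{K89} and used as a black box, so there is no in-paper argument to compare against. Your outline is essentially Kobayashi's original proof in \cite{K89}: reduce properness to relative compactness of $L\cap QHQ^{-1}$ for all compact $Q$, push this through the Cartan projection $\mu$ using its properness and bounded distortion under left/right multiplication by compacta, identify $\mu(L)$ and $\mu(H)$ with the chamber traces of $W(\mathfrak{a})\mathfrak{a}_l$ and $W(\mathfrak{a})\mathfrak{a}_h$, and finish with the cone/rescaling lemma. One minor remark: since $K_l\subset K$ and $\mathfrak{a}_l\subset\mathfrak{a}$, you in fact get the exact equality $\mu(L)=\bigl(W(\mathfrak{a})\cdot\mathfrak{a}_l\bigr)\cap\mathfrak{a}^{+}$ (and similarly for $H$), so the ``bounded Hausdorff distance'' hedging is unnecessary here, though harmless. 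The argument is sound.
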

\begin{corollary}[The Calabi-Markus phenomenon, \cite{K89}, Corollary 4.4]
If 
$$\operatorname{rank}_{\mathbb{R}}(\mathfrak{g})=\operatorname{rank}_{\mathbb{R}}(\mathfrak{h}),$$ 
 then only finite subgroups of $G$ act properly on $G/H.$
\end{corollary}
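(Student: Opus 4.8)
The plan is to avoid Kobayashi's criterion (Theorem~\ref{critkob}) altogether — it cannot be applied verbatim here, since an infinite discrete subgroup is not a reductive subgroup with finitely many connected components — and instead to argue directly from the Cartan ($KAK$) decomposition. First I would use the standing normalization already in force: after conjugating $H$ (which replaces $G/H$ by a $G$-equivariantly diffeomorphic space, hence does not affect properness) we may assume $\theta(\mathfrak h)=\mathfrak h$ and $\mathfrak a_h\subset\mathfrak a$, where $\mathfrak a_h$ is maximal abelian in $\mathfrak p_h$ and $\mathfrak a$ is maximal abelian in $\mathfrak p$. Since $\operatorname{rank}_{\mathbb R}(\mathfrak g)=\dim\mathfrak a$ and $\operatorname{rank}_{\mathbb R}(\mathfrak h)=\dim\mathfrak a_h$, the hypothesis forces $\mathfrak a_h=\mathfrak a$, so $\mathfrak a\subset\mathfrak h$ and therefore the connected subgroup $A=\exp_G(\mathfrak a)$ is contained in $H$.

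Next I would invoke the Cartan decomposition $G=K\exp(\overline{\mathfrak a_{+}})K\subset KAK$, which together with $A\subset H$ yields $KHK=G$. Let $\pi\colon G\to G/H$ be the projection and put $S=\pi(K)$; this is a compact subset of $G/H$, being the continuous image of the compact group $K$. For any subgroup $\Gamma\subset G$ one has $\gamma\cdot S\cap S\neq\emptyset$ iff $k_2^{-1}\gamma k_1\in H$ for some $k_1,k_2\in K$, i.e. iff $\gamma\in KHK$; hence $\Gamma_{S}=\Gamma\cap KHK=\Gamma\cap G=\Gamma$. If the action of $\Gamma$ on $G/H$ is proper, then by definition $\Gamma_{S}$ is compact, so $\Gamma$ is a compact subgroup of $G$; when $\Gamma$ is discrete this forces $\Gamma$ to be finite, which is the assertion.

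I do not expect a serious obstacle: the identity $\Gamma_S=\Gamma$ is immediate once $KHK=G$ is established, and the only point that genuinely needs care is the remark made at the outset — one must \emph{not} try to deduce the corollary by substituting $L=\Gamma$ into Theorem~\ref{critkob}, because the equivalence there is stated for reductive $L$ with finitely many components and is vacuous (hence useless) for an infinite discrete $\Gamma$. Alternatively, one could quote Kobayashi's properness criterion in terms of the Cartan projection $\mu\colon G\to\overline{\mathfrak a_{+}}$ from \cite{K89}, noting that $\mathfrak a_h=\mathfrak a$ makes $\mu(H)$ cofinal in $\overline{\mathfrak a_{+}}$ and therefore forces $\mu(\Gamma)$ to be bounded, whence $\Gamma$ lies in a compact subset of $G$; but the $KAK$ argument above is shorter and self-contained. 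The same reasoning shows, more generally, that in the equal-rank case any closed subgroup of $G$ acting properly on $G/H$ is compact.
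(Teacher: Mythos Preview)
Your argument is correct and is essentially Kobayashi's original proof of the Calabi--Markus phenomenon via the $KAK$ decomposition. Note, however, that the present paper does not give a proof of this corollary at all: it is simply quoted from \cite[Corollary~4.4]{K89} immediately after Theorem~\ref{critkob}, with no argument supplied, so there is nothing in the paper to compare your proof against beyond the bare citation. Your observation that Theorem~\ref{critkob} as stated (for reductive $L$) cannot be applied verbatim to an infinite discrete $\Gamma$ is well taken, and the $KAK$ argument you give --- concluding $KHK=G$ from $\mathfrak a_h=\mathfrak a$ and hence $\Gamma_S=\Gamma$ for $S=\pi(K)$ --- bypasses this cleanly. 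Your closing remark that the same reasoning shows any closed subgroup acting properly must be compact is the correct general form of the statement.
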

\noindent There is one more tool of checking if $G/H$ admits or does not admit proper actions of non-virtually abelian discrete subgroups.  It does not depend on the embedding $\mathfrak{h}\hookrightarrow\mathfrak{g}$.  Let $\Sigma^+$ be the set of positive restricted roots of $\mathfrak{g}$ with respect to $\mathfrak{a}$. Let $w_0\in W(\mathfrak{g}^c)$ denote the longest element in the Weyl group of $\mathfrak{g}^c$.   It determines an involutive automorphism $\iota$ of the Cartan subalgebra of $\mathfrak{g}$ given by the formula $X\rightarrow -(w_0X)$. Let $\mathfrak{a}^+$ denote the subset 
$$\mathfrak{a}^+=\{H\in\mathfrak{a}\,|\,\alpha(H)\geq 0,\,\forall \alpha\in\Sigma^+\}.$$
Consider the convex cone
$$\mathfrak{b}^+=\{H\in\mathfrak{a}^+\,\,|\,\iota(H)=H\}.$$
\begin{definition} {\rm An a-hyperbolic rank of $\mathfrak{g}$ is defined as} 
$$\operatorname{rank}_{a-hyp}\mathfrak{g}=\dim\mathfrak{b}^+.$$
\end{definition}

\begin{theorem}[ \cite{bt}, Theorem 8] Under the assumptions of Theorem \ref{critkob}
	\begin{enumerate}
		\item If $\operatorname{rank}_{\textrm{a-hyp}} \mathfrak{g} = \operatorname{rank}_{\textrm{a-hyp}} \mathfrak{h}$ then $G/H$ does not admit proper actions of non virtually abelian discrete subgroups.
		\item If $\operatorname{rank}_{\textrm{a-hyp}} \mathfrak{g} > \operatorname{rank}_{\mathbb{R}} \mathfrak{h}$ then $G/H$ admits a proper action of a subgroup $L\subset G$ locally isomorphic to $\mathfrak{sl}(2,\mathbb{R}) .$
	\end{enumerate}
\label{th8}
\end{theorem}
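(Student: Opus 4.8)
\emph{Proof strategy.} The two assertions will be proved by different means. For (2) I will use Kobayashi's criterion (Theorem \ref{critkob}) to reduce the problem to a linear-algebra statement about where the hyperbolic $\mathfrak{sl}(2,\mathbb{R})$-triples of $\mathfrak{g}$ sit relative to $\mathfrak{a}_h$; for (1) I will use Benoist's criterion \cite{ben} for proper actions of non-virtually abelian discrete subgroups. The common thread is that the Cartan projection of an inverse, $\mu(g^{-1})$, is the $\iota$-image of $\mu(g)$, so that any two-sided (hence any non-virtually abelian) dynamics is forced into directions fixed by $\iota$, which is exactly what the cone $\mathfrak{b}^+$ records. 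Throughout I keep the Cartan-compatible data $\mathfrak{a}_h,\mathfrak{a}_l\subseteq\mathfrak{a}$ and $W(\mathfrak{a})$ fixed as before Theorem \ref{critkob}.

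\emph{Part (2).} By Theorem \ref{critkob} it suffices to produce a $\theta$-stable subalgebra $\mathfrak{l}\cong\mathfrak{sl}(2,\mathbb{R})$ of $\mathfrak{g}$ with $W(\mathfrak{a})\mathfrak{a}_l\cap\mathfrak{a}_h=\{0\}$: then $L$ acts properly on $G/H$, and every cocompact lattice $\Gamma\subset L$, being a surface group, is non-virtually abelian and acts properly as well. Such an $\mathfrak{l}$ is generated by a $\theta$-stable $\mathfrak{sl}(2,\mathbb{R})$-triple $(e,h,f)$ (which exists for any real nilpotent $e$) whose neutral element $h$ can be conjugated into $\mathfrak{a}^+$; then $\mathfrak{a}_l=\mathbb{R}h$, and in fact $h\in\mathfrak{b}^+$, since the Weyl involution of $\mathfrak{l}$ conjugates $h$ to $-h$ inside $G$, so that $W(\mathfrak{a})h=W(\mathfrak{a})(-h)$ and comparison of dominant representatives gives $-w_0h=h$, i.e. $\iota(h)=h$. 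It therefore remains to find a real nilpotent orbit whose dominant neutral element avoids every translate $w\,\mathfrak{a}_h$, $w\in W(\mathfrak{a})$. This is the step that will consume the hypothesis $\operatorname{rank}_{\textrm{a-hyp}}\mathfrak{g}>\operatorname{rank}_{\mathbb{R}}\mathfrak{h}=\dim\mathfrak{a}_h$: I will invoke the structural fact that the dominant neutral elements of $\mathfrak{sl}(2,\mathbb{R})$-triples in $\mathfrak{g}$ generate a polyhedral cone of dimension $\operatorname{rank}_{\textrm{a-hyp}}\mathfrak{g}=\dim\mathfrak{b}^+$, lying in sufficiently general position that, $\bigcup_{w\in W(\mathfrak{a})}w\,\mathfrak{a}_h$ being a finite union of subspaces of dimension $<\dim\mathfrak{b}^+$, at least one of those neutral elements lies off that union. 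The triple through it is then the required $\mathfrak{l}$.

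\emph{Part (1).} I will argue by contraposition, first proving the algebraic statement: a proper action on $G/H$ of some $\mathfrak{l}\cong\mathfrak{sl}(2,\mathbb{R})$ forces $\operatorname{rank}_{\textrm{a-hyp}}\mathfrak{g}>\operatorname{rank}_{\textrm{a-hyp}}\mathfrak{h}$. Indeed $\mathfrak{b}^+(\mathfrak{h})\subseteq\mathfrak{a}_h$, and for $X\in\mathfrak{b}^+(\mathfrak{h})$ one has $-X=w_0^{\mathfrak{h}}X\in W(\mathfrak{a}_h)X$, hence $-X$ is $G$-conjugate to $X$, hence $\iota(wX)=wX$ for the $w\in W(\mathfrak{a})$ taking $X$ to the dominant chamber; by finiteness of $W(\mathfrak{a})$ a single such $w$ carries a full-dimensional subcone $C$ of $\mathfrak{b}^+(\mathfrak{h})$ into $\mathfrak{b}^+(\mathfrak{g})$, which already yields $\operatorname{rank}_{\textrm{a-hyp}}\mathfrak{g}\ge\operatorname{rank}_{\textrm{a-hyp}}\mathfrak{h}$. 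The dominant neutral element $h$ of $\mathfrak{l}$ lies in $\mathfrak{b}^+(\mathfrak{g})$ (as in part (2)) but, by Theorem \ref{critkob}, in no $W(\mathfrak{a})$-translate of $\mathfrak{a}_h$; since $\operatorname{span}(wC)\subseteq w\,\mathfrak{a}_h$, the vector $h$ is independent of $wC$, so $h$ together with $wC$ spans a subspace of $\operatorname{span}\mathfrak{b}^+(\mathfrak{g})$ of dimension $\operatorname{rank}_{\textrm{a-hyp}}\mathfrak{h}+1$, giving strictness. Now assume $\operatorname{rank}_{\textrm{a-hyp}}\mathfrak{g}=\operatorname{rank}_{\textrm{a-hyp}}\mathfrak{h}$ and suppose a non-virtually abelian discrete $\Gamma\subset G$ acts properly on $G/H$; Benoist's criterion \cite{ben} --- in the form, used also by Okuda \cite{ok} via nilpotent orbits, that a proper action of a non-virtually abelian discrete subgroup entails a proper action of a subgroup locally isomorphic to $SL(2,\mathbb{R})$ --- then produces an $\mathfrak{l}\cong\mathfrak{sl}(2,\mathbb{R})$ acting properly on $G/H$, contradicting the rank equality. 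Hence no such $\Gamma$ exists.

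\emph{Main obstacle.} The crux is the structural lemma used in part (2): that every dominant neutral element lies in $\mathfrak{b}^+$ is immediate, but what is needed is that there are enough of them, in sufficiently general position, to escape all the walls $w\,\mathfrak{a}_h$ once $\dim\mathfrak{b}^+>\dim\mathfrak{a}_h$; I expect proving this to require the classification of real nilpotent orbits (or a uniform treatment via Cayley transforms and maximal strongly orthogonal sets of restricted roots), and to be the bulk of the work. A secondary technical point, in part (1), is establishing Benoist's criterion in the precise form used above, which rests on Zariski closures, convexity of limit cones, the sharp divergence form of Kobayashi's criterion for discrete subgroups, and Jacobson--Morozov theory to manufacture the $SL(2,\mathbb{R})$.
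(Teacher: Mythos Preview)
The paper does not prove this theorem; it is quoted from \cite{bt} and used as a tool, so there is no proof in the paper to compare against. I comment only on the correctness of your argument.

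Your part (1) contains a genuine error. The implication you attribute to Benoist --- that a proper action of a non-virtually abelian discrete subgroup on $G/H$ forces a proper action of some $L$ locally isomorphic to $SL(2,\mathbb{R})$ --- is false in general; the present paper itself records (in the remarks after Theorem~\ref{thm:maximal8}, citing \cite{b}) homogeneous spaces admitting proper actions of non-virtually abelian discrete $\Gamma$ but no proper $SL(2,\mathbb{R})$-action. Okuda's equivalence \cite{ok} is for irreducible symmetric spaces, not arbitrary reductive $H$. Ironically, you already hold the correct ingredient just before the wrong turn: from your subcone argument, if $\operatorname{rank}_{\textrm{a-hyp}}\mathfrak{g}=\operatorname{rank}_{\textrm{a-hyp}}\mathfrak{h}$ then $\operatorname{span}(wC)=\operatorname{span}\mathfrak{b}^+(\mathfrak{g})$, hence $\mathfrak{b}^+(\mathfrak{g})\subset w\,\mathfrak{a}_h\subset W(\mathfrak{a})\cdot\mathfrak{a}_h$. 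The correct form of Benoist's criterion \cite{ben} is that $G/H$ admits a proper action of a non-virtually abelian discrete subgroup if and only if $\mathfrak{b}^+\not\subset W(\mathfrak{a})\cdot\mathfrak{a}_h$; invoking that directly finishes (1) without ever manufacturing an $SL(2,\mathbb{R})$.

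For part (2) your reduction via Theorem~\ref{critkob} is sound, and you correctly isolate the real difficulty: the inequality $\dim\mathfrak{b}^+>\dim\mathfrak{a}_h$ shows $\mathfrak{b}^+\not\subset\bigcup_{w} w\,\mathfrak{a}_h$ as a cone, but the dominant neutral elements of real $\mathfrak{sl}(2,\mathbb{R})$-triples form only a \emph{finite} subset of $\mathfrak{b}^+$, so one must still show they are not all trapped in the walls. This is precisely the structural lemma supplied in \cite{bt} (that these neutral elements span $\mathfrak{b}^+$, via the Satake-diagram description of real nilpotent orbits), and without it the argument is incomplete, as you yourself note.
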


\noindent The a-hyperbolic rank of simple real Lie algebras can be calculated using the data in Table \ref{tab1}. Table \ref{tab1} can also be used to calculate the a-hyperbolic rank of a reductive real Lie algebra since the following holds:
\begin{itemize}
	\item the a-hyperbolic rank of a semisimple Lie algebra equals the sum of a-hyperbolic ranks of all its simple parts.
	\item the a-hyperbolic rank of a reductive Lie algebra equals the a-hyperbolic rank of its derived subalgebra.
\end{itemize}

\begin{center}
 \begin{table}[h]
 \centering
 {\footnotesize
 \begin{tabular}{| c | c | c |}
   \hline
   \multicolumn{3}{|c|}{ \textbf{\textit{a-hyperbolic ranks of simple Lie algebras}}} \\
   \hline                        
   $\mathfrak{g}$ & $\text{rank}_{a-hyp} (\mathfrak{g})$ & $\text{rank}_{\mathbb{R}} (\mathfrak{g})$ \\
   \hline
   $\mathfrak{sl}(2k,\mathbb{R})$, $\mathfrak{sl}(2k,\mathbb{C})$  & k &  2k-1 \\
   {\scriptsize $k\geq 2$} & & \\
   \hline
   $\mathfrak{sl}(2k+1,\mathbb{R})$, $\mathfrak{sl}(2k+1,\mathbb{C})$  & k & 2k \\
   {\scriptsize $k\geq 1$} & & \\
   \hline
   $\mathfrak{su}^{\ast}(4k)$  & k & 2k-1 \\
   {\scriptsize $k\geq 2$} & & \\
   \hline
   $\mathfrak{su}^{\ast}(4k+2)$  & k & 2k \\
   {\scriptsize $k\geq 1$} & & \\
   \hline
   $\mathfrak{so}(2k+1,2k+1)$  & 2k & 2k+1 \\
   {\scriptsize $k\geq 2$} & &  \\
   \hline
	 $\mathfrak{e}_{6}^{\text{I}}$ & 4 & 6 \\
	 \hline
   $\mathfrak{e}_{6}^{\text{IV}}$  & 1 & 2 \\
   \hline  
	$\mathfrak{so}(4k+2,\mathbb{C})$  & 2k & 2k+1 \\
	{\scriptsize $k\geq 2$} & & \\
   \hline 
	$\mathfrak{e}_{6}^{\mathbb{C}}$  & 4 & 6 \\
   \hline 
 \end{tabular}
 }
\captionsetup{justification=centering}
 \caption{
 The table contains all simple real Lie algebras $\mathfrak{g},$ for which 
$\operatorname{rank}_{\mathbb{R}}(\mathfrak{g}) \neq \operatorname{rank}_{a-hyp}(\mathfrak{g})$}
 \label{tab1}
 \end{table}
\end{center}

A triple $(h,e,f)$ of vectors in $\mathfrak{g}$ is called an  $\mathfrak{sl}(2,\mathbb{R})$-triple  if

$$[h,e]=2e, \ \ [h,f]=-2f, \ \mathrm{and} \ [e,f]=h.$$
%One can show that $e$ is nilpotent and the adjoint orbit $Ad_{G}h$ is antipodal and hyperbolic (see \cite{cmg}). 
If $G$ is non-compact, there exists a homomorphism of Lie groups
$\nu : SL(2,\mathbb{R}) \rightarrow G$,
determined by 
$$d\nu  \left( \begin{array}{cc} 1 & 0  \\ 0 & -1  \end{array} \right) = h, \ \ d\nu  
\left( \begin{array}{cc} 0 & 1  \\ 0 & 0  \end{array} \right) = e, \ \ d\nu  \left( 
\begin{array}{cc} 0 & 0  \\ 1 & 0  \end{array} \right) = f.$$

The same definition applied to  $\mathfrak{sl}(2,\mathbb{C})$ yields an $\mathfrak{sl}(2,\mathbb{C})$-triple in a complex semisimple Lie algebra $\mathfrak{g}^c$.

\subsection{Presentation of the little Weyl group} 
It is convenient for us to follow \cite{FH}, although there are other sources of the material presented here. In this section $G$ denotes a reductive algebraic group over an arbitrary field $k$ of characteristic not $2$, $\theta$ denotes an involution in $\operatorname{Aut}(G)$, and 
$$K=G^{\theta}=\{g\in G\,|\,\theta(g)=g\}.$$

Let $\mathfrak{g}$ denote the Lie algebra of $G$. The involution $\theta\in\operatorname{Aut}(G)$ induces an involution in $\operatorname{Aut}(\mathfrak{g})$, which is denoted by the same letter. We get a decomposition
$$\mathfrak{g}=\mathfrak{g}^{\theta}\oplus\mathfrak{p}=\mathfrak{k}\oplus\mathfrak{p},$$
$$\mathfrak{k}=\mathfrak{g}^{\theta}=\{\theta(A)=A,\,|\,A\in\mathfrak{g}\},\,\mathfrak{p}=\{A\in\mathfrak{g}\,|\,\theta(A)=-A\}.$$
For a toral subalgebra $\mathfrak{t}\subset\mathfrak{g}$ one defines the set of roots $\Phi(\mathfrak{t})$ in a standard way, as well as the root decomposition
$$\mathfrak{g}=\mathfrak{g}_0\oplus\sum_{\alpha\in\Phi(\mathfrak{t})}\mathfrak{g}_{\alpha}, \ \  \mathfrak{g}_0=Z_{\mathfrak{g}}(\mathfrak{t}).$$
If $\mathfrak{t}$ is a {\it maximal} toral subalgebra, then  $\Phi(\mathfrak{t})$ is a (reduced) root system. In general, if $\mathfrak{t}$ is not maximal, this is not the case. However, if $\mathfrak{a}\subset\mathfrak{p}$ is a maximal toral subalgebra in $\mathfrak{p}$, $\Phi(\mathfrak{a})$ is a root system as well. In this section we always assume the following:
\begin{itemize}
\item $\mathfrak{t}$ is a maximal toral subalgebra of $\mathfrak{g}$, such that $\mathfrak{a}\subset\mathfrak{t}$, where $\mathfrak{a}\subset\mathfrak{p}$ is a maximal toral subalgebra in $\mathfrak{p}$;
\item $\Phi(\mathfrak{t})$, $\Phi(\mathfrak{a})$ denote the root systems determined by $\mathfrak{t}$ and $\mathfrak{a}$, respectively.
\end{itemize}
\begin{proposition}[\cite{FH}, Lemma 1]\label{prop:theta-stab} In the above notation:
\begin{enumerate}
\item $\mathfrak{t}$ is $\theta$-stable,
\item $\mathfrak{t}=\mathfrak{t}_{+}\oplus\mathfrak{t}_{-}$, where $\mathfrak{t}_{\pm}=\{x\in\mathfrak{t}\,|\,\theta(x)=\pm x\}$,
\item $\mathfrak{t}_{-}=\mathfrak{a}$, $\mathfrak{t}_{+}\subset\mathfrak{k}$.
\end{enumerate}
\end{proposition}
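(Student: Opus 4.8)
The plan is to deduce all three statements from a single structural fact: the centralizer $\mathfrak{m}:=Z_{\mathfrak{g}}(\mathfrak{a})$ is $\theta$-stable and reductive, and satisfies $\mathfrak{m}\cap\mathfrak{p}=\mathfrak{a}$. To set this up I would first note that $\mathfrak{a}\subset\mathfrak{p}$ forces $\theta|_{\mathfrak{a}}=-\mathrm{id}$, so $\theta(\mathfrak{a})=\mathfrak{a}$; hence $\mathfrak{m}=Z_{\mathfrak{g}}(\mathfrak{a})$ is $\theta$-stable, it is reductive as the centralizer of a torus in a reductive Lie algebra, and $\mathfrak{a}$ lies in its centre. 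Since $\mathfrak{t}$ is abelian and contains $\mathfrak{a}$ it commutes with $\mathfrak{a}$, so $\mathfrak{t}\subseteq\mathfrak{m}$; and as any toral subalgebra of $\mathfrak{m}$ is a toral subalgebra of $\mathfrak{g}$, maximality of $\mathfrak{t}$ in $\mathfrak{g}$ forces $\mathfrak{t}$ to be a maximal toral subalgebra of $\mathfrak{m}$.

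The key step is to prove $\mathfrak{m}\cap\mathfrak{p}=\mathfrak{a}$. I would decompose the reductive algebra as $\mathfrak{m}=Z(\mathfrak{m})\oplus\mathfrak{m}'$ with $\mathfrak{m}'=[\mathfrak{m},\mathfrak{m}]$, both summands $\theta$-stable. The centre $Z(\mathfrak{m})$ is toral, so $Z(\mathfrak{m})\cap\mathfrak{p}$ is a toral subalgebra of $\mathfrak{p}$ containing $\mathfrak{a}$, hence equals $\mathfrak{a}$ by maximality of $\mathfrak{a}$; thus $\mathfrak{m}\cap\mathfrak{p}=\mathfrak{a}\oplus\mathfrak{w}$ with $\mathfrak{w}:=\mathfrak{m}'\cap\mathfrak{p}$, and it remains to show $\mathfrak{w}=0$. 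First I would check that every $x\in\mathfrak{w}$ is nilpotent: its Jordan decomposition $x=x_{s}+x_{n}$ takes place within the algebraic algebra $\mathfrak{m}'$ and is preserved by $\theta$, so $x_{s},x_{n}\in\mathfrak{m}'\cap\mathfrak{p}$, and $x_{s}$ commutes with $\mathfrak{a}$; were $x_{s}\neq 0$ then, since $x_{s}\notin\mathfrak{a}$ (because $\mathfrak{a}\cap\mathfrak{m}'=0$), the subalgebra $\mathfrak{a}\oplus k x_{s}$ would be toral, contained in $\mathfrak{p}$, and strictly larger than $\mathfrak{a}$, contradicting maximality. So $x=x_{n}$ is nilpotent, whence $B(x,x)=\operatorname{tr}(\operatorname{ad} x)^{2}=0$ for $B$ the Killing form of $\mathfrak{m}'$. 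Since $\theta$ preserves $B$ we have $\mathfrak{m}'\cap\mathfrak{k}\perp\mathfrak{w}$, so $B$ restricts nondegenerately to $\mathfrak{w}$; but $B(x,x)=0$ for all $x\in\mathfrak{w}$ forces $B|_{\mathfrak{w}}\equiv 0$ in characteristic $\neq 2$, so $\mathfrak{w}=0$ and $\mathfrak{m}\cap\mathfrak{p}=\mathfrak{a}$.

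To conclude, I would observe that $\mathfrak{q}:=\mathfrak{m}\cap\mathfrak{k}=Z_{\mathfrak{k}}(\mathfrak{a})$ is an ideal of $\mathfrak{m}$, since $[\mathfrak{a},\mathfrak{q}]=0$ and $[\mathfrak{q},\mathfrak{q}]\subseteq\mathfrak{q}$, and that $\mathfrak{m}=\mathfrak{a}\oplus\mathfrak{q}$ is therefore a direct sum of ideals. Consequently every maximal toral subalgebra of $\mathfrak{m}$ has the form $\mathfrak{a}\oplus\mathfrak{t}_{\mathfrak{q}}$ with $\mathfrak{t}_{\mathfrak{q}}$ maximal toral in $\mathfrak{q}$; applying this to $\mathfrak{t}$, which we saw is a maximal toral subalgebra of $\mathfrak{m}$, gives $\mathfrak{t}=\mathfrak{a}\oplus\mathfrak{t}_{\mathfrak{q}}$ with $\mathfrak{t}_{\mathfrak{q}}\subseteq\mathfrak{q}\subseteq\mathfrak{k}$. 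Hence $\theta$ acts on $\mathfrak{t}$ by $-\mathrm{id}$ on $\mathfrak{a}$ and by $+\mathrm{id}$ on $\mathfrak{t}_{\mathfrak{q}}$: this yields (1) that $\mathfrak{t}$ is $\theta$-stable, (2) the eigenspace decomposition $\mathfrak{t}=\mathfrak{t}_{+}\oplus\mathfrak{t}_{-}$, and (3) that $\mathfrak{t}_{+}=\mathfrak{t}_{\mathfrak{q}}\subseteq\mathfrak{k}$ and $\mathfrak{t}_{-}=\mathfrak{a}$. (Once (1) is known, (3) also follows directly: $\mathfrak{t}_{-}=\mathfrak{t}\cap\mathfrak{p}$ is a toral subalgebra of $\mathfrak{p}$ containing $\mathfrak{a}$, hence equals $\mathfrak{a}$, and $\mathfrak{t}_{+}\subseteq\mathfrak{g}^{\theta}=\mathfrak{k}$ by definition.)

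The main obstacle is the identity $\mathfrak{m}\cap\mathfrak{p}=\mathfrak{a}$, and inside it the exclusion of nilpotent elements of $\mathfrak{p}$ centralizing $\mathfrak{a}$: the semisimple part is disposed of cheaply by maximality of $\mathfrak{a}$, but the nilpotent part requires the extra input of a $\theta$-invariant nondegenerate invariant form whose associated quadratic form vanishes on nilpotents — which is exactly where the hypothesis $\operatorname{char}k\neq 2$ enters, along with the tacit requirements that the relevant trace (or Killing) form be nondegenerate on $\mathfrak{m}'$ and that Jordan decomposition be available over $k$. Everything else is soft structure theory of reductive Lie algebras and their involutions.
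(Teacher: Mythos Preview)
The paper does not supply its own proof of this proposition; it merely quotes it as Lemma~1 of \cite{FH}. So there is no argument in the paper to compare yours against directly.

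That said, your proof is correct under the hypotheses you flag at the end (characteristic zero, or more generally: $k$-rational Jordan decomposition, the decomposition $\mathfrak{m}=Z(\mathfrak{m})\oplus\mathfrak{m}'$, and nondegeneracy of the Killing form on $\mathfrak{m}'$). The reduction to $Z_{\mathfrak{g}}(\mathfrak{a})\cap\mathfrak{p}=\mathfrak{a}$, the splitting $\mathfrak{m}=\mathfrak{a}\oplus\mathfrak{q}$ as a sum of ideals, and the consequent decomposition $\mathfrak{t}=\mathfrak{a}\oplus\mathfrak{t}_{\mathfrak{q}}$ are all clean. One minor streamlining: for the semisimple-part step you do not actually need $\mathfrak{m}=Z(\mathfrak{m})\oplus\mathfrak{m}'$ --- if $x\in\mathfrak{m}\cap\mathfrak{p}$ then already $x_{s}\in\mathfrak{m}\cap\mathfrak{p}$ commutes with $\mathfrak{a}$, so $\mathfrak{a}+kx_{s}$ is toral in $\mathfrak{p}$ and maximality forces $x_{s}\in\mathfrak{a}$. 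The direct-sum decomposition is only needed to isolate a linear complement $\mathfrak{w}$ on which the Killing-form argument eliminates nilpotents.

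For context, in the real case the paper actually cares about, the argument collapses: every element of $\mathfrak{p}$ is semisimple (because $\operatorname{ad}X$ is symmetric with respect to the positive-definite form $-B(\cdot,\theta\cdot)$), so ``maximal toral in $\mathfrak{p}$'' coincides with ``maximal abelian in $\mathfrak{p}$'', and $Z_{\mathfrak{g}}(\mathfrak{a})\cap\mathfrak{p}=\mathfrak{a}$ is immediate without any Jordan decomposition or Killing-form trick. Your longer route is the appropriate one for the general algebraic-group setting of \cite{FH}, where nilpotent elements of $\mathfrak{p}$ centralizing $\mathfrak{a}$ genuinely have to be excluded.
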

Thus, $\Phi(\mathfrak{t})$ and $\Phi(\mathfrak{a})$ are root systems. The relation between them is described  in \cite{FH} as follows. Let 
$$R(\mathfrak{t})=\mathbb{Z}_{span}(\Phi(\mathfrak{t})).$$
Note that $\theta$ acts on $\mathfrak{t}$, and, therefore, on $R(\mathfrak{t})$.
Introduce the following notation:
\begin{itemize}
\item  $\Delta(\mathfrak{t})$ is a basis of $\Phi(\mathfrak{t})$ and $\Delta(\mathfrak{a})$ s a basis of $\Phi(\mathfrak{a})$,
\item $\Phi_0=\{\alpha\in\Phi(\mathfrak{t})\,|\,\alpha|_{\mathfrak{a}}=0\}$,
\item $X_0(\theta)=\{\chi\in R(\mathfrak{t})\,|\,\theta(\chi)=\chi\}$, and $\Phi_0(\theta)=\Phi(\mathfrak{t})\cap X_0(\theta)$
\end{itemize}
\begin{proposition}[\cite{FH}, Lemma 5]\label{prop:phi0} In the above notation,
\begin{enumerate}
\item $\Phi_0$ and $X_0$ are $\theta$-stable,
\item $\Phi_0(\theta)$ is a closed subsystem of $\Phi(\mathfrak{t})$,
\item $\Phi_0=\Phi_0(\theta)$.
\end{enumerate}
\end{proposition}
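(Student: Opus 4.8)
The plan is to translate everything into the root lattice $R(\mathfrak{t})$ and exploit the $\pm1$-eigenspace decomposition $\mathfrak{t}=\mathfrak{t}_{+}\oplus\mathfrak{t}_{-}$ of Proposition \ref{prop:theta-stab}, in which $\mathfrak{t}_{-}=\mathfrak{a}$. Recall (as noted above) that, $\mathfrak{t}$ being $\theta$-stable, $\theta$ permutes $\Phi(\mathfrak{t})$: concretely, for $x\in\mathfrak{g}_{\alpha}$ one computes $[h,\theta x]=\theta[\theta h,x]=\alpha(\theta h)\,\theta x$, so $\theta(\mathfrak{g}_{\alpha})=\mathfrak{g}_{\alpha\circ\theta}$ and $\theta\alpha:=\alpha\circ\theta\in\Phi(\mathfrak{t})$; hence $\theta$ acts $\mathbb{Z}$-linearly on $R(\mathfrak{t})$. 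Consequently $X_{0}(\theta)=\ker\bigl(\theta-\mathrm{id}\bigr)$ is a subgroup of $R(\mathfrak{t})$, and it is $\theta$-stable by its very definition. For $\Phi_{0}$: if $\alpha\in\Phi_{0}$ and $x\in\mathfrak{a}=\mathfrak{t}_{-}$ then $(\theta\alpha)(x)=\alpha(\theta x)=-\alpha(x)=0$, so $\theta\alpha\in\Phi_{0}$. This proves part (1).

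For part (3) I would dualize the eigenspace splitting. Since $\mathrm{char}\,k\neq 2$, the induced involution on $\mathfrak{t}^{*}$ is diagonalizable with eigenvalues $\pm1$, and its $(+1)$-eigenspace is exactly $\{\mu\in\mathfrak{t}^{*}\mid\mu|_{\mathfrak{a}}=0\}$: for $h=h_{+}+h_{-}$ the identity $\theta\mu=\mu$ reads $\mu(h_{-})=-\mu(h_{-})$, i.e.\ $\mu(h_{-})=0$, and conversely $\mu|_{\mathfrak{a}}=0$ forces $\mu(\theta h)=\mu(h_{+})=\mu(h)$. Taking $\mu=\alpha\in\Phi(\mathfrak{t})$ yields the chain $\alpha\in\Phi_{0}\iff\alpha|_{\mathfrak{a}}=0\iff\theta\alpha=\alpha\iff\alpha\in\Phi(\mathfrak{t})\cap X_{0}(\theta)=\Phi_{0}(\theta)$, which is part (3); in particular this re-derives the $\theta$-stability of $\Phi_{0}$, all of whose elements are $\theta$-fixed.

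Part (2) is then formal, being the general fact that the intersection of a root system with a subgroup of its lattice is a closed subsystem: $\Phi_{0}(\theta)=\Phi(\mathfrak{t})\cap X_{0}(\theta)$ satisfies $-\Phi_{0}(\theta)=\Phi_{0}(\theta)$ (since $-\alpha\in\Phi(\mathfrak{t})$ and $\theta(-\alpha)=-\alpha$); if $\alpha,\beta\in\Phi_{0}(\theta)$ and $\alpha+\beta\in\Phi(\mathfrak{t})$ then $\alpha+\beta\in X_{0}(\theta)$, so $\alpha+\beta\in\Phi_{0}(\theta)$; and $s_{\alpha}\beta=\beta-\langle\beta,\alpha^{\vee}\rangle\alpha\in\Phi(\mathfrak{t})\cap X_{0}(\theta)$ for $\alpha,\beta\in\Phi_{0}(\theta)$, because $X_{0}(\theta)$ is a $\mathbb{Z}$-module. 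I do not foresee any genuine obstacle: once one notes that $\theta$ acts on $\Phi(\mathfrak{t})$ and on $R(\mathfrak{t})$, all three parts reduce to elementary $\pm1$-eigenspace bookkeeping, with the hypothesis $\mathrm{char}\,k\neq 2$ entering precisely where those eigenspaces must split (e.g.\ in $\mu(h_{-})=-\mu(h_{-})\Rightarrow\mu(h_{-})=0$). The only step demanding a little attention is carrying out the dualization in part (3) correctly.
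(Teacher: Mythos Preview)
Your argument is correct. Note, however, that the paper itself does not supply a proof of this proposition: it is quoted from \cite{FH}, Lemma~5, and used as a black box. Your approach---identifying $\Phi_0$ with the $\theta$-fixed roots via the dual $\pm1$-eigenspace decomposition of $\mathfrak{t}^*$ (valid because $\operatorname{char}k\neq 2$), and then observing that the intersection of $\Phi(\mathfrak{t})$ with any subgroup of $R(\mathfrak{t})$ is automatically a closed subsystem---is the standard one and matches what one finds in the cited reference.
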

For a subset $S\subset\Phi(\mathfrak{t})$ denote by $W(S)$ the subgroup of the Weyl group $W(\Phi(\mathfrak{t}))$ generated by reflections $s_{\alpha},\alpha\in S$. Introduce the following subgroups of $W(\Phi(\mathfrak{t}))$:
$$W_0(\theta)=W(\Phi_0),\,W_1(\theta)=\{w\in W(\Phi(\mathfrak{t}))\,|\,w(X_0(\theta))=X_0(\theta)\}.$$
\begin{theorem}[\cite{FH}, Proposition 1]\label{thm:little Weyl} In the notation above 
$$W(\mathfrak{a})=W_1(\theta)/W_0(\theta).$$
\end{theorem}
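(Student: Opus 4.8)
The plan is to realise $W(\mathfrak a)$ as the image of a restriction homomorphism defined on $W_1(\theta)$ and to compute its kernel. First I set up the geometry. Since $\mathfrak t$ is $\theta$-stable (Proposition \ref{prop:theta-stab}), $\theta$ permutes $\Phi(\mathfrak t)$, hence normalises $W(\Phi(\mathfrak t))$ inside $GL(R(\mathfrak t)\otimes\mathbb R)$; averaging a $W(\Phi(\mathfrak t))$-invariant inner product over $\langle\theta\rangle$ I fix an inner product on $V:=R(\mathfrak t)\otimes\mathbb R$ invariant under $W(\Phi(\mathfrak t))$ and under $\theta$. Write $V=V_{+}\oplus V_{-}$ for the $\pm1$-eigenspace decomposition of $\theta$; it is orthogonal, $V_{+}=X_0(\theta)\otimes\mathbb R$ (taking $\theta$-fixed points commutes with extension of scalars), every $\chi\in V_{+}$ vanishes on $\mathfrak a=\mathfrak t_{-}$, and $V_{-}$ restricts isomorphically onto $\mathfrak a^{*}$. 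Under this identification the restriction map $\chi\mapsto\chi|_{\mathfrak a}$ becomes the orthogonal projection $p\colon V\to V_{-}$; consequently $\ker(p|_{R(\mathfrak t)})=R(\mathfrak t)\cap V_{+}=X_0(\theta)$, $\Phi(\mathfrak a)=p(\Phi(\mathfrak t))\setminus\{0\}$ spans $V_{-}$, $\Phi_0=\Phi(\mathfrak t)\cap V_{+}$, and $W(\mathfrak a)$ acts faithfully on $V_{-}$.

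Next I define $\psi\colon W_1(\theta)\to GL(V_{-})$ by $\psi(w)=w|_{V_{-}}$. This is well defined: $w\in W_1(\theta)$ preserves $X_0(\theta)$, hence $V_{+}=X_0(\theta)\otimes\mathbb R$, hence, being orthogonal, also $V_{-}$; and it is clearly a homomorphism. Conversely, any $w\in W(\Phi(\mathfrak t))$ fixing $V_{-}$ pointwise preserves $V_{+}$ and therefore already lies in $W_1(\theta)$, so $\ker\psi$ is exactly the pointwise stabiliser of the subspace $V_{-}$ in $W(\Phi(\mathfrak t))$. By Steinberg's theorem on isotropy subgroups in finite reflection groups, this stabiliser is generated by the reflections it contains, namely the $s_{\beta}$ with $\beta\perp V_{-}$, i.e. $\beta\in\Phi(\mathfrak t)\cap V_{+}=\Phi_0$. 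Hence $\ker\psi=W(\Phi_0)=W_0(\theta)$, and $\psi$ descends to an injection $W_1(\theta)/W_0(\theta)\hookrightarrow GL(V_{-})$.

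It remains to identify the image as $W(\mathfrak a)$. Since $w\in W_1(\theta)$ preserves both $V_{+}$ and $V_{-}$ it commutes with $p$, so $\psi(w)p=pw$; as $w$ permutes $\Phi(\mathfrak t)$ and $p(w\beta)=0\iff w\beta\in V_{+}\iff\beta\in V_{+}\iff p\beta=0$, the map $\psi(w)$ permutes $\Phi(\mathfrak a)$, so $\operatorname{im}\psi\subseteq\operatorname{Aut}(\Phi(\mathfrak a))$. For $W(\mathfrak a)\subseteq\operatorname{im}\psi$ I lift reflections: fixing compatible positive systems of $\Phi(\mathfrak t)$ and $\Phi(\mathfrak a)$ (so that $\Phi_0$ is the set of black simple roots of a Satake-type diagram), each simple reflection $s_{\bar\alpha}\in W(\mathfrak a)$ is the $\psi$-image of a suitable product of longest elements of the standard parabolic subgroups of $W(\Phi(\mathfrak t))$ attached to $\Phi_0$ and to the simple root(s) of $\Phi(\mathfrak t)$ restricting to $\bar\alpha$; such an element lies in $W_1(\theta)$, and a short computation with $p$ shows that it restricts to $s_{\bar\alpha}$ on $V_{-}$. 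As these reflections generate $W(\mathfrak a)$, this yields $W(\mathfrak a)\subseteq\operatorname{im}\psi$.

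\textbf{The main obstacle} is the reverse inclusion $\operatorname{im}\psi\subseteq W(\mathfrak a)$: a priori $\psi(w)$ is only an automorphism of $\Phi(\mathfrak a)$, and one must exclude a nontrivial diagram-automorphism component. I would choose the positive system of $\Phi(\mathfrak t)$ by a generic functional $H_0+\varepsilon H_1$ with $H_0\in V_{-}$ regular for $\Phi(\mathfrak a)$, $H_1\in V_{+}$ and $\varepsilon>0$ small, so that $p$ carries $\Phi^{+}(\mathfrak t)\setminus\Phi_0$ onto a positive system $\Phi^{+}(\mathfrak a)$ and $\Phi^{+}(\mathfrak t)\cap\Phi_0$ is a positive system of $\Phi_0$; then, given $w\in W_1(\theta)$, I multiply it by one of the reflection lifts above so that $\psi(w)$ stabilises $\Phi^{+}(\mathfrak a)$, adjust further on the left by an element of $W_0(\theta)=W(\Phi_0)$, and run a bookkeeping argument on the Satake-type data to force $w$ to stabilise all of $\Phi^{+}(\mathfrak t)$, whence $w=1$ and $\psi(w)=\operatorname{id}$. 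The delicate point here is that an element of $W_1(\theta)$ may act on the subsystem $\Phi_0$ through an outer automorphism, so one genuinely needs that $W_1(\theta)$ is the stabiliser of the \emph{subspace} $V_{-}$ (equivalently of the lattice $X_0(\theta)$), not merely of the set $\Phi_0$; this, together with the $\theta$-invariance of the inner product, is what makes the reduction work. Everything else — the set-up, the computation of $\ker\psi$ via Steinberg's theorem, and the construction of the reflection lifts — is routine.
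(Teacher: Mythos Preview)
The paper does not give its own proof of this statement: Theorem~\ref{thm:little Weyl} is quoted verbatim from \cite{FH} (Daniel--Helminck, Proposition~1) and used as a black box in the subsequent discussion, so there is nothing in the paper to compare your argument against.

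On its merits, your sketch is the standard route and is essentially correct, including the part you flag as the main obstacle. Once you have arranged that $\psi(w)$ stabilises $\Phi^{+}(\mathfrak a)$, the bookkeeping closes cleanly: since $w\in W_1(\theta)$ preserves $V_{+}$ it preserves $\Phi_0=\Phi(\mathfrak t)\cap V_{+}$, so $w(\Phi_0^{+})$ is a positive system of $\Phi_0$ and a unique $w_0\in W_0(\theta)$ sends it back to $\Phi_0^{+}$. Because $W_0(\theta)$ fixes $V_{-}$ pointwise, for each $\alpha\in\Phi^{+}(\mathfrak t)\setminus\Phi_0$ one has $p(w_0w\alpha)=p(w\alpha)=\psi(w)\,p(\alpha)\in\Phi^{+}(\mathfrak a)$, hence $w_0w\alpha\in\Phi^{+}(\mathfrak t)\setminus\Phi_0$; combined with $w_0w(\Phi_0^{+})=\Phi_0^{+}$ this forces $w_0w=1$, so $\psi(w)=1$. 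No separate case analysis on outer automorphisms of $\Phi_0$ is needed --- any such twist is absorbed by the choice of $w_0$. Together with your reflection lifts and the Steinberg kernel computation, this yields $\operatorname{im}\psi=W(\mathfrak a)$, so there is no genuine gap.
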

\noindent We will follow the usual terminology and call $W(\mathfrak{a})$ the little Weyl group. Thus, Theorem \ref{thm:little Weyl} describes the relation between the little Weyl group and the Weyl group of $\mathfrak{g}$ (which is $W(\Phi)$).

Now let $k=\mathbb{R}$ and assume $\mathfrak{g}$ is a non-compact semisimple {\it real} Lie algebra. Let $\mathfrak{g}^c$ be the complexification of $\mathfrak{g}$. Consider a Cartan decomposition $\mathfrak{g}=\mathfrak{k}\oplus\mathfrak{p}$ and the corresponding Cartan involution $\theta$. Choose a maximal split abelian subspace $\mathfrak{a}\subset\mathfrak{p}$. Let $W(\mathfrak{a})$ be the little Weyl group.  Extend $\theta$ onto $\mathfrak{g}^c$ by linearity. Then
\begin{itemize}
\item $\mathfrak{g}^c=\mathfrak{k}^c\oplus\mathfrak{p}^c, \mathfrak{k}^c=(\mathfrak{g}^c)^{\theta},\,\mathfrak{p}^c=(\mathfrak{g}^c)_{-}$,
\item one can choose a Cartan subalgebra of $\mathfrak{g}^c$ in the form
$$\mathfrak{t}^c=\mathfrak{t}_{+}^c\oplus\mathfrak{a}^c$$
\item $\mathfrak{t}^c(\mathbb{R})=(i\mathfrak{t}_{+})\oplus\mathfrak{a}$
\end{itemize}
On the other hand, considering $\mathfrak{g}^c$ and $\theta\in\operatorname{Aut}(\mathfrak{g}^c)$ one can obtain $\Phi(\mathfrak{g}^c)$, $\Phi(\mathfrak{a}^c)$ and the Weyl group $W(\mathfrak{a}^c)$.
\begin{lemma}\label{lemma:littleWcompl} $W(\mathfrak{a^c})\cong W(\mathfrak{a})$.
\end{lemma}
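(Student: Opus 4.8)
The plan is to show that the little Weyl group $W(\mathfrak{a})$, defined via the real Lie algebra $\mathfrak{g}$ and its Cartan involution $\theta$, coincides with the group $W(\mathfrak{a}^c)$ obtained by running the same construction (Theorem~\ref{thm:little Weyl}) for the pair $(\mathfrak{g}^c,\theta)$, where $\theta$ is extended to $\mathfrak{g}^c$ by $\mathbb{C}$-linearity. The key point is that both groups are computed from the \emph{same} combinatorial data: the root system $\Phi(\mathfrak{t}^c)$ of $\mathfrak{g}^c$ with respect to the Cartan subalgebra $\mathfrak{t}^c=\mathfrak{t}_+^c\oplus\mathfrak{a}^c$, together with the action of $\theta$ on it. Indeed, by the structure theory recalled just before the lemma, $\mathfrak{t}$ is a maximal toral subalgebra of $\mathfrak{g}$ containing $\mathfrak{a}$, and $\mathfrak{t}^c$ is a Cartan subalgebra of $\mathfrak{g}^c$; the roots of $\mathfrak{g}$ relative to $\mathfrak{t}$ and the roots of $\mathfrak{g}^c$ relative to $\mathfrak{t}^c$ are identified under complexification, so $\Phi(\mathfrak{t})=\Phi(\mathfrak{t}^c)$ as abstract root systems, and likewise the Weyl groups $W(\Phi(\mathfrak{t}))=W(\Phi(\mathfrak{t}^c))$ agree.

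Concretely, I would proceed as follows. First, observe that $\theta$ restricted to $\mathfrak{t}$ and $\theta$ restricted to $\mathfrak{t}^c$ have the same matrix (the complex one is the $\mathbb{C}$-linear extension), hence induce the same involution on the root lattice $R(\mathfrak{t})=R(\mathfrak{t}^c)=\mathbb{Z}_{\mathrm{span}}\,\Phi(\mathfrak{t})$. Consequently the fixed sublattice $X_0(\theta)$ is the same in both pictures, and therefore the subsystems $\Phi_0(\theta)=\Phi(\mathfrak{t})\cap X_0(\theta)$ coincide; by Proposition~\ref{prop:phi0}(3) this equals $\Phi_0$ in both cases. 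Next, the two subgroups entering Theorem~\ref{thm:little Weyl},
$$W_0(\theta)=W(\Phi_0),\qquad W_1(\theta)=\{w\in W(\Phi(\mathfrak{t}))\,|\,w(X_0(\theta))=X_0(\theta)\},$$
depend only on the pair $(\Phi(\mathfrak{t}),\theta|_{R(\mathfrak{t})})$, which we have just argued is identical for $\mathfrak{g}$ and for $\mathfrak{g}^c$. Hence $W_0(\theta)$ and $W_1(\theta)$ are literally the same subgroups of the same Weyl group in both cases, and taking quotients gives
$$W(\mathfrak{a})=W_1(\theta)/W_0(\theta)=W(\mathfrak{a}^c),$$
where the first equality is Theorem~\ref{thm:little Weyl} applied to $\mathfrak{g}$ and the last is the same theorem applied to $\mathfrak{g}^c$. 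Finally, one should check that this abstract isomorphism is compatible with the natural identification $\mathfrak{a}\hookrightarrow\mathfrak{a}^c=\mathfrak{a}\otimes_{\mathbb{R}}\mathbb{C}$, i.e.\ that the action of $W(\mathfrak{a})$ on $\mathfrak{a}$ is the real form of the action of $W(\mathfrak{a}^c)$ on $\mathfrak{a}^c$; this follows because the relevant reflections are defined over $\mathbb{R}$ (the restricted roots of $\mathfrak{g}$ take real values on $\mathfrak{a}$).

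The only genuinely delicate point is the very first identification, namely that the restricted root data — the system $\Phi(\mathfrak{a})$, not merely $\Phi(\mathfrak{t})$ — behaves correctly under complexification, so that ``the little Weyl group of $\mathfrak{g}^c$ with respect to $\theta$'' is really the Weyl group of the restricted root system $\Phi(\mathfrak{a})$ of the original real algebra. This is where one uses that $\mathfrak{a}$ is a maximal toral subalgebra of $\mathfrak{p}=\mathfrak{g}_-$ \emph{and} $\mathfrak{a}^c$ is a maximal toral subalgebra of $\mathfrak{p}^c=(\mathfrak{g}^c)_-$ (the maximality is preserved because $\mathfrak{a}^c=\mathfrak{a}\otimes\mathbb{C}$), together with $\mathfrak{t}^c(\mathbb{R})=(i\mathfrak{t}_+)\oplus\mathfrak{a}$ from the displayed list preceding the lemma. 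Once one grants that $\Phi(\mathfrak{a})=\Phi(\mathfrak{a}^c)$ as root systems, the equality of Weyl groups is a formal consequence of Theorem~\ref{thm:little Weyl} as sketched above; I expect the bulk of the writeup to consist of carefully pinning down these identifications rather than any nontrivial computation.
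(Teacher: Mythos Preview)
Your argument is correct but proceeds differently from the paper. The paper's proof is a two-line reference: it observes that the restricted root system $\Phi(\mathfrak{a})$ of the real algebra, as constructed in \cite{OV}, p.~155, and the system $\Phi(\mathfrak{a}^c)$ arising from the Daniel--Helminck framework are both obtained by the \emph{same} projection of $\mathfrak{t}^c(\mathbb{R})^*$ onto $\mathfrak{a}^*$; hence $\Phi(\mathfrak{a})=\Phi(\mathfrak{a}^c)$ and the Weyl groups agree immediately. You instead pass through the presentation $W(\mathfrak{a})=W_1(\theta)/W_0(\theta)$ of Theorem~\ref{thm:little Weyl}, applied once over $\mathbb{R}$ and once over $\mathbb{C}$, and compare the inputs $(\Phi(\mathfrak{t}),\theta)=(\Phi(\mathfrak{t}^c),\theta)$. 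This is a legitimate and more self-contained route (no appeal to a specific page in \cite{OV}), at the cost of being longer and of having to justify that Theorem~\ref{thm:little Weyl} really does compute the classical little Weyl group when $k=\mathbb{R}$. Note, incidentally, that your final paragraph is slightly at cross purposes with the rest: once you grant $\Phi(\mathfrak{a})=\Phi(\mathfrak{a}^c)$ the Weyl group isomorphism is immediate without any use of Theorem~\ref{thm:little Weyl}, so that ``delicate point'' is really the paper's entire argument rather than a supplement to yours.
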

\begin{proof} This follows from the explicit construction of the real root system $\Phi(\mathfrak{a})$ in \cite{OV}, p. 155. The set of  roots  $\Phi(\mathfrak{a})$ are constructed in the same way as the set of roots  $\Phi(\mathfrak{a}^c)$ in \cite{FH}. Both sets are obtained as projections of $\mathfrak{t}^c(\mathbb{R})^*$ onto $\mathfrak{a}^*$.  

\end{proof}

\subsection{Weighted Dynkin diagrams of nilpotent orbits}
%Choose a maximal split Cartan subalgebra $\mathfrak{t}+\mathfrak{a}$ of $\mathfrak{g}$ and a positive set of restricted roots $\Sigma^{+}$ of the restricted root system of $\mathfrak{g}.$ Now $\mathfrak{j}:=\mathfrak{t}^{c}+\mathfrak{a}^{c}$ is a Cartan subalgebra of $\mathfrak{g}^{c}.$ One can choose a positive subset $\Delta^{+}$ of the root system $\Delta$ of $\mathfrak{g}^{c}$ so that nonzero projections of roots from $\Delta^{+}$ are exactly roots from $\Sigma^{+}.$ 

Given an $\mathfrak{sl}(2,\mathbb{C})$-triple we may assume that $h$ is in the closed positive Weyl chamber of $\mathfrak{g}^{c} .$ 
\begin{definition} The weighted Dynkin diagram of the triple $(h,e,f)$ {\rm is the weighted Dynkin diagram $\psi_h$.}
\end{definition}

\begin{theorem}[\cite{ok}, Proposition 7.8]
The complex adjoint orbit through $e$ meets $\mathfrak{g}$ if and only if the weighted Dynkin diagram of $(h,e,f)$ matches the Satake diagram of $\mathfrak{g} .$
\label{thok}
\end{theorem}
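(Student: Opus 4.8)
The plan is to recast the statement, via the Jacobson--Morozov theorem and Kostant's theory of $\mathfrak{sl}(2)$-triples, as a claim purely about the semisimple element $h$, and to reduce everything to the Lie-combinatorial lemma
$$(\ast)\qquad \psi_h\ \text{matches the Satake diagram of}\ \mathfrak g\ \Longleftrightarrow\ h\in\mathfrak a^{+}\subset\mathfrak g,$$
where I use the standing hypothesis that $h$ is already the dominant representative of its $\operatorname{Ad}(G^c)$-orbit. Granting $(\ast)$, both implications of the theorem become short.

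\emph{Necessity.} If $\operatorname{Ad}(G^c)e\cap\mathfrak g\neq\emptyset$, fix a nilpotent $e_0$ in this set; by Jacobson--Morozov over $\mathbb R$ I extend it to an $\mathfrak{sl}(2,\mathbb R)$-triple $(h_0,e_0,f_0)\subset\mathfrak g$, complexify, and use the conjugacy theorem for $\mathfrak{sl}(2,\mathbb C)$-triples (triples whose nilpositive parts lie in the same $\operatorname{Ad}(G^c)$-orbit are $\operatorname{Ad}(G^c)$-conjugate) to conclude that $h_0$ is $\operatorname{Ad}(G^c)$-conjugate to $h$. Since $h_0\in\mathfrak g$ is hyperbolic I conjugate it inside $G$ into $\mathfrak a^{+}$; because the roots of $\Delta_0$ vanish on $\mathfrak a$ and those of $\Pi_1$ restrict to dominant restricted roots, this representative lies in the closed dominant chamber of $\mathfrak j^c$, hence equals $h$. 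So $h\in\mathfrak a^{+}$, and $(\ast)$ gives that $\psi_h$ matches the Satake diagram.

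\emph{Sufficiency.} If $\psi_h$ matches the Satake diagram, then $h\in\mathfrak a^{+}\subset\mathfrak g$ by $(\ast)$, so $(h,e,f)$ is an $\mathfrak{sl}(2,\mathbb C)$-triple with neutral element in $\mathfrak g$, and the $+2$-eigenspace $\mathfrak g^c_2(h)$ of $\operatorname{ad}(h)$ is the complexification of the real subspace $\mathfrak g_2(h):=\mathfrak g^c_2(h)\cap\mathfrak g$. By Kostant's theorem $\operatorname{Ad}(G^c)e\cap\mathfrak g^c_2(h)$ is the open dense $Z_{G^c}(h)^{0}$-orbit in $\mathfrak g^c_2(h)$, hence a non-empty Zariski-open set; it therefore meets the Zariski-dense real form $\mathfrak g_2(h)$, and any point of the intersection lies in $\operatorname{Ad}(G^c)e\cap\mathfrak g$, as required.

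\emph{The main obstacle: the lemma $(\ast)$.} The remaining inputs (real Jacobson--Morozov, conjugacy of $\mathfrak{sl}(2)$-triples, Kostant's open-orbit theorem, conjugacy of hyperbolic elements into $\mathfrak a^{+}$, Zariski density of real forms) are standard, so the real work is $(\ast)$. The implication $h\in\mathfrak a^{+}\Rightarrow$ matching is routine: black weights vanish because $\Pi_0\subset\Delta_0$, and for $\alpha$ joined to $\tilde\sigma\alpha$ by an arrow the identities $\sigma^{*}\alpha\equiv\tilde\sigma\alpha\pmod{\mathbb Z\Pi_0}$, $\sigma(h)=h$ and $\alpha(h)\in\mathbb R$ give $\alpha(h)=(\sigma^{*}\alpha)(h)=(\tilde\sigma\alpha)(h)$. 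The converse is the delicate point. Vanishing of the black weights only forces $h\in\bigcap_{\gamma\in\Delta_0}\ker\gamma\subset\mathfrak j^c$, which, together with the integrality of $\psi_h$, localizes $h$ in $\mathfrak a\oplus i\,Z(\mathfrak m)$ with $\mathfrak m=Z_{\mathfrak k}(\mathfrak a)$, and one must kill the $i\,Z(\mathfrak m)$-component. This is exactly where the arrow condition is used: writing $h=h_a+ih_m$ and comparing $(\sigma^{*}\alpha)(h)=\overline{\alpha(\sigma h)}$ with $\alpha(h)=(\tilde\sigma\alpha)(h)$ forces $\alpha(h_m)=0$ for every $\alpha\in\Pi_1$, while $\gamma(h_m)=0$ for $\gamma\in\Pi_0$ is automatic, so $h_m$ is annihilated by all of $\Pi$, hence is central in $\mathfrak g^c$ and therefore $0$. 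I expect this case-free treatment of the ``compact Cartan component'' of $h$ — weighing the precise form of the Satake involution $\sigma^{*}$ against $\tilde\sigma$ and using that the restricted roots span $\mathfrak a^{*}$ — to be the only step demanding genuine care, and I would isolate it as a preliminary proposition characterizing which weighted Dynkin diagrams match a given Satake diagram.
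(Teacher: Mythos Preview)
The paper does not prove this statement at all: it is quoted as Proposition~7.8 of Okuda~\cite{ok} and used as a black box, so there is no proof in the present paper to compare your attempt against.

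That said, your argument is essentially correct and is in fact the standard route to Okuda's result. The reduction to the lemma $(\ast)$ is the heart of the matter, and your verification of $(\ast)$ is sound: writing $h=h_a+h_t$ with $h_a\in\mathfrak a$, $h_t\in i\mathfrak t$, the black-node condition kills $\gamma(h_t)$ for $\gamma\in\Pi_0$, and combining the arrow condition with $\sigma^{*}\alpha\equiv\tilde\sigma\alpha\pmod{\mathbb Z\Pi_0}$ forces $(\sigma^{*}\alpha)(h)=\alpha(h)$, hence $\alpha(h_t)=0$ for $\alpha\in\Pi_1$ as well, so $h_t=0$. The surrounding steps (real Jacobson--Morozov, Kostant's conjugacy of $\mathfrak{sl}_2$-triples with the same neutral element, the open dense $Z_{G^c}(h)$-orbit in $\mathfrak g^c_2(h)$, Zariski density of the real form) are correctly invoked. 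One small point worth making explicit in the necessity direction: the claim that $\mathfrak a^{+}$ lies inside the closed dominant Weyl chamber for $\Delta$ relies on the compatible ordering built into the Satake-diagram construction (so that restrictions of roots in $\Pi_1$ are precisely the simple restricted roots); you use this implicitly and it would be worth stating.
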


\subsection{De Graaf-Marrani  database and algorithms}
In \cite{dg} the database of all real forms of embeddings of maximal reductive subalgebras of the complex simple Lie algebras of rank up to $8$ was created. We refer to this article for the details of the algorithms and implementations. Here we only make several remarks showing how to use the database.  We say that a pair of real Lie algebras $(\mathfrak{g},\mathfrak{h})$ is a real form of an embedding of complex reductive Lie algbras $\mathfrak{g}^c,\mathfrak{h}^c$, if $\mathfrak{h}^c\subset\mathfrak{g}^c$ and $\mathfrak{h}\subset\mathfrak{g}$. Note that it is possible to get a maximal real semisimple subalgebra $\mathfrak{h}$ in $\mathfrak{g}$ without $\mathfrak{h}^c$ being maximal in $\mathfrak{g}^c$, so only the real forms of embeddings of maximal complex semisimple subalgebras are considered. Algorithms which create the database distinguish between regular and non-regular subalgebras as follows.
\begin{itemize}
\item A subalgebra $\mathfrak{h}^c\subset\mathfrak{g}^c$ is called {\it regular}, if there exists a Cartan subalgebra $\mathfrak{t}^c\subset\mathfrak{g}^c$ such that $[\mathfrak{t}^c,\mathfrak{h}^c]\subset\mathfrak{h}^c$.
\item A subalgebra $\mathfrak{h}^c\subset\mathfrak{g}^c$ is $R$-subalgebra, if it is contained in a proper regular subalgebra in $\mathfrak{g}^c$,
\item A subalgebra $\mathfrak{h}^c\subset\mathfrak{g}^c$ is called $S$-subalgebra, if it it is not an $R$-subalgebra.
\end{itemize} 
Algorithms of creating the database are based on the following description of the real forms of complex embeddings $\mathfrak{h}^c\subset\mathfrak{g}^c$. A real form $\mathfrak{g}$ is given by three maps $\tau,\sigma,\theta:\mathfrak{g}^c\rightarrow\mathfrak{g}^c$ where
\begin{itemize}
\item $\tau,\sigma$ are anti-involutions, $\theta$ is an involution,
\item $\theta=\tau\sigma=\sigma\tau,$
\item $(\mathfrak{g}^c)^{\tau}$ is compact, $(\mathfrak{g}^c)^{\sigma}=\mathfrak{g}$,
\item $\theta$ is a Cartan involution,
\item $\theta$ leaves $\mathfrak{u}=(\mathfrak{g}^c)^{\tau}$ invariant with the eigenspaces $\mathfrak{u}_1\oplus\mathfrak{u}_{-1}$ and 
$$\mathfrak{k}=\mathfrak{u}_1,\mathfrak{p}=\mathfrak{u}_{-1}.$$
\end{itemize} 
The algorithm for enumerating  regular subalgebras is given in \cite{dg} (Section 4.1). The real $S$-subalgebras are classified in a different manner. Let $G^c$ denote the adjoint group of $\mathfrak{g}^c$.  Denote by $G$ the group of all $g\in G^c$ preserving $\mathfrak{g}$, and by $G_0$ the identity component of $G$. There is no direct way to list all  $S$-subalgebras up to conjugacy in $G_0$. Therefore, the algorithms are based on the following approach. Let $\varepsilon: \mathfrak{h}^c\hookrightarrow \mathfrak{g}^c$ be an embedding of semisimple complex Lie algebras and let $\mathfrak{g}$ be a real form of $\mathfrak{g}^c$. Find (up to isomorphism) the real forms of $\mathfrak{g}$ of $\mathfrak{g}^c$ such that $\varepsilon(\mathfrak{h})\subset\mathfrak{g}$. Since any two real forms $\mathfrak{g}$ and $\mathfrak{g}'$ of $\mathfrak{g}^c$ are isomorphic if and only if they are conjugate by an automorphism $\varphi\in\operatorname{Aut}(\mathfrak{g}^c)$ \cite{OV}, we may replace the given embedding by $\varphi\circ\varepsilon$.  
The algorithm is based on the following.
\begin{proposition}[\cite{dg}, Proposition 4.1]\label{prop:real-f} Let $\mathfrak{g}\subset\mathfrak{g}^c$ be a real form of $\mathfrak{g}^c$ such that $\varepsilon(\mathfrak{h})\subset\mathfrak{g}$. Then there are a compact real form $\mathfrak{u}$ of $\mathfrak{g}^c$, with conjugation $\tau:\mathfrak{g}^c\rightarrow\mathfrak{g}^c$ and an involution $\theta$ of $\mathfrak{g}^c$ such that
\begin{enumerate}
\item $\varepsilon(\mathfrak{u}_h)\subset\mathfrak{u}$,
\item $\varepsilon\theta_h=\theta\varepsilon_h$,
\item $\theta\tau=\tau\theta$,
\item there is a Cartan decomposition $\mathfrak{g}=\mathfrak{k}\oplus\mathfrak{p}$ such that the restriction of $\theta$ onto $\mathfrak{g}$is the corresponding Cartan involution and $\mathfrak{u}=\mathfrak{k}\oplus i\mathfrak{p}$.
\end{enumerate}
Conversely, if $\mathfrak{u}\subset\mathfrak{g}$ is a compact real form with corresponding conjugation $\tau$ and involution $\theta$ such that $(1),(2),(3)$ hold, then $\theta$ leaves $\mathfrak{u}$ invariant and setting $\mathfrak{k}=\mathfrak{u}_1, \mathfrak{p}=i\mathfrak{u}_{-1}$ we get that $\mathfrak{g}=\mathfrak{k}\oplus\mathfrak{p}$ is a real form $\mathfrak{g}^c$ with $\varepsilon(\mathfrak{h})\subset\mathfrak{g}$.
\end{proposition}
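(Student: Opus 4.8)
\noindent\emph{Proof idea.} The plan is to read the statement as a \emph{relative} version of the classical theorem that every real form of $\mathfrak{g}^c$ admits a compatible compact real form (cf.\ \cite{OV}): the extra requirement is that the compact form $\mathfrak{u}$ and the involution $\theta$ also be compatible with the fixed subalgebra $\varepsilon(\mathfrak{h}^c)$. I would treat the converse first, since it is pure linear algebra. Assume $\mathfrak{u},\tau,\theta$ satisfy (1)--(3). By (3), $\theta$ preserves $\mathfrak{u}=(\mathfrak{g}^c)^{\tau}$, so I split $\mathfrak{u}=\mathfrak{u}_{1}\oplus\mathfrak{u}_{-1}$ into $\theta$-eigenspaces and put $\mathfrak{k}=\mathfrak{u}_{1}$, $\mathfrak{p}=i\mathfrak{u}_{-1}$, $\mathfrak{g}=\mathfrak{k}\oplus\mathfrak{p}$. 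Using $\mathfrak{g}^c=\mathfrak{u}\oplus i\mathfrak{u}$ and the bracket relations among $\theta$-eigenspaces, one verifies that $\mathfrak{g}$ is a real form of $\mathfrak{g}^c$, that $\theta|_{\mathfrak{g}}$ has $(\pm 1)$-eigenspaces $\mathfrak{k},\mathfrak{p}$, and that $X\mapsto -B(X,\theta X)$ is positive definite on $\mathfrak{g}$ (the Killing form $B$ being negative definite on the compact form $\mathfrak{u}$); hence $\theta|_{\mathfrak{g}}$ is a Cartan involution and $\mathfrak{u}=\mathfrak{k}\oplus i\mathfrak{p}$. Finally, with $\mathfrak{u}_h=\mathfrak{k}_h\oplus i\mathfrak{p}_h$ the compact real form of $\mathfrak{h}^c$ attached to $\theta_h$, conditions (1) and (2) yield $\varepsilon(\mathfrak{k}_h)\subset\mathfrak{k}$ and $\varepsilon(\mathfrak{p}_h)\subset\mathfrak{p}$, so $\varepsilon(\mathfrak{h})\subset\mathfrak{g}$.

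For the forward direction, denote by $\sigma$ the conjugation of the given real form $\mathfrak{g}$ and by $\mathfrak{h}=(\mathfrak{h}^c)^{\sigma_h}$ the real form of $\mathfrak{h}^c$ with $\varepsilon(\mathfrak{h})\subset\mathfrak{g}$. First I would fix a Cartan involution $\theta_h=\tau_h\sigma_h=\sigma_h\tau_h$ of $\mathfrak{h}$, with Cartan decomposition $\mathfrak{h}=\mathfrak{k}_h\oplus\mathfrak{p}_h$ and compact real form $\mathfrak{u}_h=\mathfrak{k}_h\oplus i\mathfrak{p}_h$ (conjugation $\tau_h$). Since $\varepsilon(\mathfrak{u}_h)$ is a compact semisimple subalgebra of $\mathfrak{g}^c$, it lies in some compact real form $\mathfrak{u}_0$ of $\mathfrak{g}^c$, i.e.\ in a maximal compact subalgebra of the real Lie algebra underlying $\mathfrak{g}^c$; let $\tau_0$ be the conjugation of $\mathfrak{u}_0$. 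From $\varepsilon(\mathfrak{h}^c)=\varepsilon(\mathfrak{u}_h)\oplus i\,\varepsilon(\mathfrak{u}_h)$, with the first summand in $\mathfrak{u}_0$ and the second in $i\mathfrak{u}_0$, the restriction of $\tau_0$ to $\varepsilon(\mathfrak{h}^c)$ equals $\varepsilon\tau_h\varepsilon^{-1}$; and the restriction of $\sigma$ to $\varepsilon(\mathfrak{h}^c)$ equals $\varepsilon\sigma_h\varepsilon^{-1}$, because $\varepsilon(\mathfrak{h})$ is a real form of $\varepsilon(\mathfrak{h}^c)$ contained in $\mathfrak{g}$. Consequently $\omega:=\sigma\tau_0$ preserves $\varepsilon(\mathfrak{h}^c)$ and restricts there to the involution $\varepsilon\theta_h\varepsilon^{-1}$, so $p:=\omega^{2}$ restricts to the identity on $\varepsilon(\mathfrak{h}^c)$.

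Then I would run the standard deformation (cf.\ \cite{OV}): with respect to the positive definite Hermitian form $\langle X,Y\rangle=-B(X,\tau_0 Y)$ the automorphism $p$ is self-adjoint and positive, so its real powers are defined, and $g:=p^{1/4}$ produces a compact real form $\mathfrak{u}:=g\mathfrak{u}_0$, with conjugation $\tau:=g\tau_0 g^{-1}$, for which $\tau\sigma=\sigma\tau$. Since $p$ is the identity on $\varepsilon(\mathfrak{h}^c)$ and, being self-adjoint, preserves its orthogonal complement, the same holds for $g$; hence $\varepsilon(\mathfrak{u}_h)=g\,\varepsilon(\mathfrak{u}_h)\subset g\mathfrak{u}_0=\mathfrak{u}$, which is (1), and $\tau$ coincides with $\tau_0$, hence with $\varepsilon\tau_h\varepsilon^{-1}$, on $\varepsilon(\mathfrak{h}^c)$. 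Putting $\theta:=\tau\sigma=\sigma\tau$, (3) is immediate; (2) follows from $\theta\varepsilon=\tau\sigma\varepsilon=\tau\varepsilon\sigma_h=\varepsilon\tau_h\sigma_h=\varepsilon\theta_h$ on $\mathfrak{h}^c$; and (4) holds because, as $\tau$ and $\sigma$ commute, $\theta$ preserves $\mathfrak{g}$ and $\theta|_{\mathfrak{g}}=\tau|_{\mathfrak{g}}$ is a Cartan involution of $\mathfrak{g}$ with $\mathfrak{k}=\mathfrak{g}\cap\mathfrak{u}$, $\mathfrak{p}=\mathfrak{g}\cap i\mathfrak{u}$ and $\mathfrak{u}=\mathfrak{k}\oplus i\mathfrak{p}$.

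The hard part is the forward direction, and within it the one nonobvious step is arranging compatibility of $\mathfrak{u}$ and $\theta$ with $\mathfrak{g}$ \emph{and} with $\varepsilon(\mathfrak{h}^c)$ simultaneously. The device is to start from a compact form $\mathfrak{u}_0$ that already contains $\varepsilon(\mathfrak{u}_h)$ — this pins down $\tau_0$ on $\varepsilon(\mathfrak{h}^c)$ — and then to observe that in the classical argument the deforming element $g=p^{1/4}$ acts as the identity on $\varepsilon(\mathfrak{h}^c)$, so that repairing compatibility with $\mathfrak{g}$ leaves the subalgebra untouched. Everything else is routine manipulation of the two conjugations $\sigma,\tau$ and the identity $\theta=\sigma\tau$.
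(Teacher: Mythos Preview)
The paper does not supply its own proof of this proposition: it is quoted from \cite{dg} (Proposition~4.1 there) as background for the algorithmic discussion, so there is nothing in the present paper to compare your argument against.

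That said, your proof is correct and is precisely the expected one. The converse is, as you say, straightforward linear algebra with the Killing form. For the forward direction you run the classical Cartan deformation argument (replace an initial compact form $\mathfrak{u}_0$ by $p^{1/4}\mathfrak{u}_0$ with $p=(\sigma\tau_0)^2$ to force $\tau\sigma=\sigma\tau$), with the single additional observation needed in the relative setting: if $\mathfrak{u}_0$ is chosen from the outset to contain $\varepsilon(\mathfrak{u}_h)$, then $\omega=\sigma\tau_0$ restricts to the involution $\varepsilon\theta_h\varepsilon^{-1}$ on $\varepsilon(\mathfrak{h}^c)$, hence $p$ and therefore $p^{1/4}$ act trivially there, and the deformation leaves the subalgebra undisturbed. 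All the verifications you list (that $\sigma$ and $\tau_0$ restrict correctly to $\varepsilon(\mathfrak{h}^c)$, that $p^{1/4}$ is a Lie algebra automorphism fixing the $1$-eigenspace of $p$, and the final check of (1)--(4)) go through as written. This is the same line of argument used in \cite{dg}.
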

Now the algorithms and implemetations in \cite{dg} are as follows. Fix a compact real form $\mathfrak{u}$ of $\mathfrak{g}^c$ and replace $\varepsilon$ by $\varphi\varepsilon$ for a $\varphi\in\operatorname{Aut}(\mathfrak{g}^c)$ to get $\varepsilon(\mathfrak{u})\subset\mathfrak{u}$. Construct the space
$$\mathcal{A}=\{A\in\operatorname{End}(\mathfrak{g}^c)\,|\,A(\operatorname{ad}(\varepsilon\theta(y)))=(\operatorname{ad}\varepsilon(y))A\,\,\forall y\in\mathfrak{g}^c\}.$$
If $\theta$ is an involution of $\mathfrak{g}^c$, then $\varepsilon\theta_h=\theta\varepsilon$ if and only of $\theta\in\mathcal{A}$ and 
$$\theta(\operatorname{ad}x)\theta=\operatorname{ad}\theta(x),\,\forall x\in\mathfrak{g}^c.$$
Hence $\mathcal{A}$ contains $\theta$. The conditions on $\theta$ given by Proposition \ref{prop:real-f} are translated to polynomial equations on the coefficients of $\theta$ with respect to a basis of $\mathcal{A}$. These polynomial equations are solved by the technique of the Gr\"obner bases. The details are given in \cite{dg1} and \cite{dg}.

\section{Homogeneous spaces determined by maximal subgroups and proper actions of non-virtually abelian discrete subgroups}
\subsection{General procedures}
\begin{theorem}\label{thm:procedure-split} Let $G/H$ be a homogeneous space of simple real Lie group $G$. Assume that $\mathfrak{g}$ and $\mathfrak{h}$ are split real forms of simple complex Lie algebras $\mathfrak{g}^c$ and $\mathfrak{h}^c.$ The following procedure decides whether $G/H$ admits a proper action of a Lie group $L$ locally isomorphic to $SL(2,\mathbb{R})$.

%Let $G/H$ be a homogeneous space of an absolutely simple linear real Lie group of rank$\leq 8$ where $H$ is a maximal proper subgroup of $G.$ Denote by $\mathfrak{g}, \mathfrak{h}$ the Lie algebras of $G,H,$ respectively. The list of all pairs $(\mathfrak{g}, \mathfrak{h})$ is given in \cite{dg}. 
%\begin{theorem}[Theorem 8 in \cite{bt}] We have
	%\begin{enumerate}
	%	\item If $\textrm{rank}_{\textrm{a-hyp}} \mathfrak{g} = \textrm{rank}_{\textrm{a-hyp}} \mathfrak{h}$ then $G/H$ does not admit discontinuous actions of non virtually abelian discrete subgroups.
		%\item If $\textrm{rank}_{\textrm{a-hyp}} \mathfrak{g} > \textrm{rank}_{\mathbb{R}} \mathfrak{h}$ then $G/H$ admits a proper action of a subgroup $L\subset G$ locally isomorphic to $\mathfrak{sl}(2,\mathbb{R}) .$
	%\end{enumerate}
%\label{th8}
%\end{theorem}

%Note that in all the above cases a pair $(\mathfrak{g}, \mathfrak{h})$ consists of the real split forms of $\mathfrak{g}^{c},$ $\mathfrak{h}^c,$ respectively.

%\textbf{Procedure for testing if the above spaces admit proper $SL(2,\mathbb{R})$ action}
\begin{enumerate}
	\item List all the non-trivial weighted Dynkin diagrams of $\mathfrak{g}^{c}$ corresponding to $\mathfrak{sl}(2,\mathbb{C})$-triples . Obtain the set \textbf{WDD} of such diagrams;
	%\item Since $\mathfrak{g}$ is the split real form thus any $sl(2, \mathbb{C})$ orbit of $\mathfrak{g}^{c}$ meets $\mathfrak{g}$ 
	\item compute the Weyl group $W(\mathfrak{g}^c)$; 
	\item determine  $H\in \mathfrak{a}^{c}$  given by the weights of the weighted Dynkin diagram in \textbf{WDD};
	\item check if there exists a weighted Dynkin diagram in \textbf{WDD} for which $Wh\cap \mathfrak{a}_h^{c} =\{0\} .$
\end{enumerate}
\end{theorem}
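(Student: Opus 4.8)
The plan is to recognise the four-step procedure as a finite, fully complexified implementation of Kobayashi's criterion (Theorem~\ref{critkob}) restricted to subgroups $L$ with $\operatorname{rank}_{\mathbb{R}}\mathfrak{l}=1$: for such $L$ the subspace $\mathfrak{a}_l$ is a line spanned by a hyperbolic element which, up to conjugacy, is the semisimple member of an $\mathfrak{sl}(2,\mathbb{R})$-triple, and for \emph{split} $\mathfrak{g}$ every $\mathfrak{sl}(2,\mathbb{C})$-triple is realised over $\mathbb{R}$ by Okuda's theorem. First I would fix the data: a Cartan involution $\theta$ of $\mathfrak{g}$ with $\theta(\mathfrak{h})=\mathfrak{h}$, the Cartan decompositions $\mathfrak{g}=\mathfrak{k}\oplus\mathfrak{p}$, $\mathfrak{h}=\mathfrak{k}_h\oplus\mathfrak{p}_h$, and maximal abelian subspaces $\mathfrak{a}_h\subset\mathfrak{a}\subset\mathfrak{p}$. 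Because $\mathfrak{g}$ and $\mathfrak{h}$ are split, $\mathfrak{a}$ is a Cartan subalgebra of $\mathfrak{g}$, $\mathfrak{a}^c$ is a Cartan subalgebra of $\mathfrak{g}^c$, the restricted root system of $\mathfrak{g}$ is all of $\Phi(\mathfrak{g}^c)$, and $\mathfrak{a}_h$ is a Cartan subalgebra of $\mathfrak{h}$, so $\mathfrak{a}_h^c:=\mathfrak{a}_h\otimes_{\mathbb{R}}\mathbb{C}$ is a Cartan subalgebra of $\mathfrak{h}^c$ inside $\mathfrak{a}^c$ (this is the Cartan obtained from the embedding data of the database). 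Since $\mathfrak{a}_h$ is a real subspace of $\mathfrak{a}$ we get $\mathfrak{a}\cap\mathfrak{a}_h^c=\mathfrak{a}_h$, and by Lemma~\ref{lemma:littleWcompl} together with splitness $W(\mathfrak{a})=W(\mathfrak{a}^c)=W(\mathfrak{g}^c)=:W$, acting on the real space $\mathfrak{a}$. Hence, for any $h\in\mathfrak{a}$, the sets $Wh\cap\mathfrak{a}_h$ and $Wh\cap\mathfrak{a}_h^c$ coincide; this is the identification that allows the procedure to run on complexified data.

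Next I would establish the dictionary between subgroups $L$ locally isomorphic to $SL(2,\mathbb{R})$ and the set $\mathbf{WDD}$. Such an $L$ corresponds to a subalgebra $\mathfrak{l}\cong\mathfrak{sl}(2,\mathbb{R})$; conjugating so that $\theta(\mathfrak{l})=\mathfrak{l}$ (legitimate by the standing assumption before Theorem~\ref{critkob}), $\mathfrak{l}$ is spanned by an $\mathfrak{sl}(2,\mathbb{R})$-triple $(h,e,f)$ with $\theta h=-h$, and since $\operatorname{rank}_{\mathbb{R}}\mathfrak{sl}(2,\mathbb{R})=1$, after a further conjugation moving the hyperbolic element $h$ into $\mathfrak{a}^+$ we obtain $\mathfrak{a}_l=\mathbb{R}h$. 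Complexifying, $(h,e,f)$ is an $\mathfrak{sl}(2,\mathbb{C})$-triple with $h$ dominant (for split $\mathfrak{g}$ and compatible positive systems the real dominant chamber lies inside the complex one), so $\psi_h\in\mathbf{WDD}$ and $h$ is exactly the element of $\mathfrak{a}^c$ produced in step 3. Conversely, given $\psi_h\in\mathbf{WDD}$: as $\mathfrak{g}$ is split its Satake diagram equals its Dynkin diagram, so $\psi_h$ trivially matches the Satake diagram; by Theorem~\ref{thok} the complex nilpotent orbit through $e$ meets $\mathfrak{g}$, and completing a real point of this orbit by the real Jacobson--Morozov theorem gives an $\mathfrak{sl}(2,\mathbb{R})$-triple $(h',e',f')\subset\mathfrak{g}$; by Kostant's conjugacy of $\mathfrak{sl}_2$-triples, $h'$ is $G^c$-conjugate to $h$, hence $h'$ conjugates into $\mathfrak{a}^+$ onto $h$ itself, so $\mathfrak{l}'=\langle h',e',f'\rangle\cong\mathfrak{sl}(2,\mathbb{R})$ realises $\psi_h$ with $\mathfrak{a}_{l'}=\mathbb{R}h$. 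Thus $\mathfrak{l}\mapsto\psi_h$ is a well-defined surjection onto $\mathbf{WDD}$, and every diagram is realised by some $\mathfrak{l}$ with $\mathfrak{a}_l=\mathbb{R}h$ for the $h$ of step 3.

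Finally I would feed this into Kobayashi's criterion. For $L$ with $\mathfrak{a}_l=\mathbb{R}h$, Theorem~\ref{critkob} says $L$ acts properly on $G/H$ iff $W(\mathfrak{a})\,\mathbb{R}h\cap\mathfrak{a}_h=\{0\}$; since the elements of $W$ are linear and preserve a positive-definite form (so $wh\neq0$ when $h\neq0$), this is equivalent to $Wh\cap\mathfrak{a}_h=\{0\}$, and by the first step to $Wh\cap\mathfrak{a}_h^c=\{0\}$ --- precisely the condition tested in step 4 for the diagram $\psi_h$. Running over all $\mathfrak{l}$ and using surjectivity onto $\mathbf{WDD}$, it follows that $G/H$ admits a proper action of a Lie group locally isomorphic to $SL(2,\mathbb{R})$ if and only if step 4 succeeds for some diagram in $\mathbf{WDD}$, which is the output of the procedure. (Non-triviality of the diagrams in step 1 just excludes $h=0$, i.e. $\mathfrak{l}=0$.)

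I expect the main obstacle to be the converse half of the dictionary: upgrading Okuda's statement that the complex orbit meets $\mathfrak{g}$ to an actual real $\mathfrak{sl}(2,\mathbb{R})$-triple whose hyperbolic element, placed in $\mathfrak{a}^+$, is exactly the $h$ read off the diagram. This is where the split hypothesis on $\mathfrak{g}$ is essential, in tandem with the real Jacobson--Morozov and Kostant conjugacy theorems; one must also verify carefully that $W(\mathfrak{a})$, $W(\mathfrak{a}^c)$ and $W(\mathfrak{g}^c)$ genuinely coincide and that the real and complex dominant chambers are compatible --- exactly the facts that license replacing Kobayashi's real criterion by the complex linear-algebra computations of steps 2--4.
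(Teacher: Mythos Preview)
Your proposal is correct and follows essentially the same approach as the paper: both reduce to Kobayashi's criterion (Theorem~\ref{critkob}), use Lemma~\ref{lemma:littleWcompl} together with splitness to identify $W(\mathfrak{a})=W(\mathfrak{a}^c)=W(\mathfrak{g}^c)$, and invoke Theorem~\ref{thok} with the split Satake diagram to ensure every weighted Dynkin diagram is realised by a real $\mathfrak{sl}(2,\mathbb{R})$-triple with $\mathfrak{a}_l=\mathbb{R}h$. Your version is more explicit about the Jacobson--Morozov/Kostant step and the passage between real and complex intersections, but these are exactly the details the paper's terse proof leaves implicit.
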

\begin{proof} The proof follows from Theorem \ref{critkob} and Lemma \ref{lemma:littleWcompl}, together with an observation that since $\mathfrak{g}$ is split, any $\mathfrak{sl}(2,\mathbb{C})$-orbit in $\mathfrak{g}^c$  meets $\mathfrak{g}$.  One takes into consideration that $W(\mathfrak{a}^c)=W(\mathfrak{g}^c)$. Indeed, one needs to check that $W(\mathfrak{a})(\mathfrak{a}_l)\cap\mathfrak{a}_h=\{0\}$, which is equivalent to 
$W(\mathfrak{a}^c)(\mathfrak{a}_l^c)\cap\mathfrak{a}_h^c=\{0\}$. In our case $\mathfrak{l}^c$ is generated by $\mathfrak{sl}(2,\mathbb{C})$-triple $(h,e,f)$, and therefore $\mathfrak{a}_l^c=\langle h\rangle$, where $h$ is determined by the weighted Dynkin diagram. This follows from Theorem \ref{thok}, since in the split case the set of the weighted Dynkin diagrams   \textbf{WDD} coincides with the set of the weighted Dynkin diagrams matching the Satake diagram. Therefore, any $\mathfrak{sl}(2,\mathbb{C})$-orbit meets $\mathfrak{g}$, and $h\in\mathfrak{a}\subset\mathfrak{a}^c$. 
\end{proof}
\begin{remark}{\rm It is conceivable, that one could  generalize this procedure to the non-split cases by applying more general algorithms developed in \cite{FH}, however, it does not seem straightforward.}
\end{remark}

\subsection{Proper actions of non-virtually abelian discrete subgroups on homogeneous spaces of small rank}
\begin{theorem}\label{thm:maximal8} Let $G/H$ be a homogeneous space of an absolutely  simple real Lie group of $\operatorname{rank}\,G\leq 8$ over a maximal semisimple subgroup.  Then $G/H$  admits proper actions of a non-virtually abelian discrete subgroup of $G$ if and only of $\operatorname{rank}_{a-hyp} \mathfrak{g} > \operatorname{rank}_{\mathbb{R}} \mathfrak{h}$ or $G/H$ is determined by one of the following pairs of $(\mathfrak{g},\mathfrak{h}$) (with the inclusion $\mathfrak{h}\hookrightarrow\mathfrak{g}$ classified up to conjugation):

$$(\mathfrak{sl}(6,\mathbb{R}), \mathfrak{sl}(2,\mathbb{R})\oplus\mathfrak{sl}(3,\mathbb{R})),$$
$$(\mathfrak{sl}(6,\mathbb{R}), \mathfrak{sl}(4,\mathbb{R})),$$
$$(\mathfrak{e}_{6(6)}, \mathfrak{sl}(3,\mathbb{R})\oplus \mathfrak{g}_{2(2)}),$$ 
$$(\mathfrak{sl}(8,\mathbb{R}), \mathfrak{sl}(2,\mathbb{R})\oplus \mathfrak{sl}(4,\mathbb{R})),$$ 
$$(\mathfrak{sl}(9,\mathbb{R}), \mathfrak{sl}(3,\mathbb{R})\oplus \mathfrak{sl}(3,\mathbb{R})).$$
\end{theorem}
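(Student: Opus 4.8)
The plan is to prove Theorem~\ref{thm:maximal8} by a finite, case-by-case verification organized around two mutually exclusive regimes, corresponding to the two tools developed in Section~2. First I would invoke Theorem~\ref{th8}(2): whenever $\operatorname{rank}_{a\text{-}hyp}\mathfrak{g} > \operatorname{rank}_{\mathbb{R}}\mathfrak{h}$, the space $G/H$ automatically admits a proper action of an $SL(2,\mathbb{R})$-subgroup, and any torsion-free lattice in that subgroup is a non-virtually abelian discrete subgroup acting properly; this disposes of one direction for those pairs. The real content is in the complementary case $\operatorname{rank}_{a\text{-}hyp}\mathfrak{g} \le \operatorname{rank}_{\mathbb{R}}\mathfrak{h}$, where I would need to show that the listed five pairs admit such actions and that no other pair does. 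For the non-existence direction I would use Theorem~\ref{th8}(1) to immediately eliminate all pairs with $\operatorname{rank}_{a\text{-}hyp}\mathfrak{g} = \operatorname{rank}_{a\text{-}hyp}\mathfrak{h}$; combined with Table~\ref{tab1}, this is a purely combinatorial computation on the finitely many $(\mathfrak{g}^c,\mathfrak{h}^c)$ appearing in the database \cite{dg} for $\operatorname{rank}\,G\le 8$.

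Next I would enumerate, using the de Graaf--Marrani database \cite{dg}, all pairs $(\mathfrak{g},\mathfrak{h})$ with $\mathfrak{g}$ absolutely simple of rank $\le 8$, $\mathfrak{h}$ a maximal semisimple subalgebra, that survive the a-hyperbolic rank sieve, i.e.\ satisfy $\operatorname{rank}_{a\text{-}hyp}\mathfrak{h} < \operatorname{rank}_{a\text{-}hyp}\mathfrak{g} \le \operatorname{rank}_{\mathbb{R}}\mathfrak{h}$. By Table~\ref{tab1}, only those $\mathfrak{g}$ with $\operatorname{rank}_{a\text{-}hyp}\mathfrak{g} \ne \operatorname{rank}_{\mathbb{R}}\mathfrak{g}$ can contribute, which is a short list ($\mathfrak{sl}(2k,\mathbb{R})$, $\mathfrak{sl}(2k+1,\mathbb{R})$, $\mathfrak{su}^*(2n)$, $\mathfrak{so}(2k+1,2k+1)$, $\mathfrak{e}_{6}^{\mathrm{I}}$, $\mathfrak{e}_{6}^{\mathrm{IV}}$, and their complex analogues), and within rank $\le 8$ this is genuinely finite and small. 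For each surviving candidate I would then apply Kobayashi's criterion (Theorem~\ref{critkob}) directly: compute the little Weyl group $W(\mathfrak{a})$ of $\mathfrak{g}$ and the subspace $\mathfrak{a}_h\subset\mathfrak{a}$, and decide whether there exists a reductive $\mathfrak{l}\subset\mathfrak{g}$ with $\mathfrak{l}$ non-compact and $W(\mathfrak{a})\mathfrak{a}_l\cap\mathfrak{a}_h=\{0\}$. In the split cases one can moreover run the explicit procedure of Theorem~\ref{thm:procedure-split}, searching over weighted Dynkin diagrams of $\mathfrak{sl}(2,\mathbb{C})$-triples for one whose $h$-vector has trivial $W(\mathfrak{g}^c)$-orbit intersection with $\mathfrak{a}_h^c$; a positive answer yields an $SL(2,\mathbb{R})$ acting properly, hence a non-virtually abelian discrete subgroup.

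For the non-split or mixed pairs in the surviving list (notably those involving $\mathfrak{su}^*(2n)$, $\mathfrak{so}(p,q)$-type embeddings, or $\mathfrak{e}_{6(6)}$ with $\mathfrak{h}=\mathfrak{sl}(3,\mathbb{R})\oplus\mathfrak{g}_{2(2)}$), Theorem~\ref{thm:procedure-split} does not literally apply, so I would fall back on Theorem~\ref{critkob} together with Theorem~\ref{thok}: one must exhibit, or rule out, an $\mathfrak{sl}(2,\mathbb{R})$-triple in $\mathfrak{g}$ whose semisimple element $h$ lies in $\mathfrak{a}$ and whose $W(\mathfrak{a})$-orbit meets $\mathfrak{a}_h$ only at $0$. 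Here I would use the weighted-Dynkin-diagram/Satake-diagram matching condition of Theorem~\ref{thok} to certify that a candidate complex nilpotent orbit is realized over $\mathbb{R}$ inside $\mathfrak{g}$, and then check the Kobayashi incidence condition for the corresponding $h$. The $\mathfrak{e}_{6(6)}$ case is the one I would single out as requiring care, since the root-system bookkeeping for $W(\mathfrak{a})$ and the embedding $\mathfrak{a}_h\hookrightarrow\mathfrak{a}$ is heaviest there; this is the natural place to lean on the implementations in \cite{gap}, \cite{corelg}, \cite{sla} to make the verification rigorous.

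The main obstacle I anticipate is precisely the organization and completeness of the enumeration: one must be certain that the database \cite{dg} lists every conjugacy class of maximal semisimple subalgebra $\mathfrak{h}\hookrightarrow\mathfrak{g}$ for all absolutely simple $\mathfrak{g}$ of rank $\le 8$, that the a-hyperbolic rank sieve is applied to every such class, and that for each of the finitely many survivors the Kobayashi criterion is checked for all possible reductive subgroups $\mathfrak{l}$ (not only $\mathfrak{sl}(2)$), since in principle a larger $\mathfrak{l}$ could act properly where $\mathfrak{sl}(2)$ cannot. In practice, by Theorem~\ref{th8}, any non-virtually abelian discrete $\Gamma$ acting properly forces $\operatorname{rank}_{a\text{-}hyp}\mathfrak{g}>\operatorname{rank}_{a\text{-}hyp}\mathfrak{h}$, and the remaining question for the survivors reduces to the existence of a properly acting $\mathfrak{sl}(2,\mathbb{R})$, so the search space is the set of nilpotent orbits, which is finite and tabulated; but making the ``if and only if'' airtight still requires confirming that the five exhibited pairs really do pass Theorem~\ref{critkob} and that every other survivor fails it. That last exhaustive check, pair by pair, is the computational heart of the proof.
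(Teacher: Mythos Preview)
Your plan is essentially the same as the paper's: sort the database into the four classes $L_0$ (Calabi--Markus), $L_1$ (equal a-hyperbolic ranks, eliminated by Theorem~\ref{th8}(1)), $L_2$ ($\operatorname{rank}_{a\text{-}hyp}\mathfrak{g}>\operatorname{rank}_{\mathbb R}\mathfrak{h}$, settled by Theorem~\ref{th8}(2)), and the residual class $L_3$; then analyze $L_3$ case by case via Kobayashi's criterion.

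The one point where you diverge from the paper is in how $L_3$ is handled. You anticipate a bifurcation into split and non-split survivors and propose a separate argument (Theorem~\ref{thok} plus a direct little-Weyl-group check) for the non-split ones, flagging $(\mathfrak{e}_{6(6)},\,\mathfrak{sl}(3,\mathbb R)\oplus\mathfrak{g}_{2(2)})$ and hypothetical $\mathfrak{su}^*$- or $\mathfrak{so}(p,q)$-type pairs as the delicate cases. The paper instead makes the computational observation that \emph{every} pair in $L_3$ is split, so Theorem~\ref{thm:procedure-split} applies uniformly and no fallback is needed. Your specific example is in fact split on both sides: $\mathfrak{e}_{6(6)}$ is the split real form of $E_6$, and $\mathfrak{sl}(3,\mathbb R)$ and $\mathfrak{g}_{2(2)}$ are the split real forms of $A_2$ and $G_2$. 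The $\mathfrak{su}^*$- and non-split $\mathfrak{so}$-type pairs you worry about do not survive the rank sieves into $L_3$. So your extra machinery is not wrong, just unnecessary; recognizing that $L_3$ is entirely split is what collapses the proof to a single algorithmic pass.
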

\begin{proof} The proof follows from the classification given in \cite{dg}, Theorem \ref{th8}, Theorem \ref{thm:procedure-split} and the computer implementation of two procedures. The first one checks the conditions on the properness given by Theorem \ref{th8} for any pair $(\mathfrak{g},\mathfrak{h})$  in the database given in \cite{dg}. 
We have four types of pairs: $L_0$ ($\operatorname{rank}_{\mathbb{R}}\mathfrak{g}=\operatorname{rank}_{\mathbb{R}}\mathfrak{h}$), $L_1$ ($\operatorname{rank}_{a-hyp}\mathfrak{g}=\operatorname{rank}_{a-hyp}\mathfrak{h}$), 
$L_2$ ($\operatorname{rank}_{a-hyp}\mathfrak{g}>\operatorname{rank}_{\mathbb{R}}\mathfrak{h})$)
 and $L_3,$ the remaining cases. At the beginning, we remove all pairs of type $L_0$, because in this case only finite subgroup can act properly on $G/H$ (the Calabi-Markus
phenomenon). Next we remove all pairs of type $L_1$, since they do not admit proper actions of non-virtually abelian discrete subgroups by Theorem \ref{th8}. We store all  pairs $(\mathfrak{g},\mathfrak{h})$ of type $L_2$ , because  they correspond to homogeneous spaces admitting proper actions of non-virtually abelian discrete subgroups. Next, we consider pairs of type $L_3$ and observe that all such pairs are split. Therefore, we are able to apply the procedure given by Theorem \ref{thm:procedure-split}. We keep in mind that we are dealing with the complexifications  of the pairs $(\mathfrak{g},\mathfrak{h})$, and, therefore, may assume that $\mathfrak{a}_h^c\subset\mathfrak{a}^c$ and $\mathfrak{a}_l^c\subset\mathfrak{a}^c$ for representatives of the database \cite{dg} (see  remarks below).
\end{proof}
\subsection{Remarks on  Theorem \ref{thm:maximal8}}
Note that the condition $\operatorname{rank}_{a-hyp}\mathfrak{g}>\operatorname{rank}_{\mathbb{R}}\mathfrak{h}$ {\it does not} tell us, what is $\Gamma$ acting properly on $G/H$. We only know that some $\Gamma$ does exist. On the other hand, the class $L_3$ consists of homogeneous spaces which admit a proper $L$-action, where $L$ is locally isomorphic to $SL(2,\mathbb{R})$. It is worth noting that it is a difficult open  problem to decide if there exists $\Gamma$ which is not a discrete subgroup of $SL(2,\mathbb{R})$ and which acts properly on $G/H$. Actually there are two examples of homogeneous spaces $G/H$ which admit a proper action of a discrete subgroup $\Gamma\subset G$, but do not admit a proper action of $L$ locally isomorphic $SL(2,\mathbb{R})$ \cite{b}. 
  
 It should be stressed, that the condition $\operatorname{rank}_{a-hyp}\mathfrak{g}>\operatorname{rank}_{\mathbb{R}}\mathfrak{h}$ can be checked in a straightforward manner using Table 1. Since the de Graaf-Marrani database contains {\it 134 tables} of various  pairs $(\mathfrak{g},\mathfrak{h})$ it does not seem reasonable to reproduce the whole class $L_2$. One can use Table 1  and properties of the a-hyperbolic rank, or use our  plugins with additional functions calculating the a-hyperbolic ranks and real ranks (see Section \ref{sect:implement}).

 In general, working with the database \cite{dg} we should keep in mind the following difficulty. The tables in \cite{dg} are created by presenting members of conjugacy classes of subalgebras $\mathfrak{h}\hookrightarrow\mathfrak{g}$. Therefore, it may happen, that the representative in the table does not satisfy the requirement $\mathfrak{a}_h\subset\mathfrak{a}$ (and this is indeed the case, with some exceptions).

\section{Implementation}\label{sect:implement}

The procedure of obtaining lists of pairs of types $L_{0}-L_{3}$ from the database in \cite{dg} is straightforward. Therefore we describe the implementation of the algorithm which checks if a given split pair $(\mathfrak{g},\mathfrak{h})$ corresponds to a homogeneous space admitting a proper action of a subgroup $L$ locally isomorphic to $SL(2,\mathbb{R}).$ Obviously if a homogeneous space $G/H$ admits a proper action of $L$ it also admits a proper action of a non-virtually abelian discrete subgroup (one can take a co-compact lattice of $L,$ for instance). 

\noindent
We have implemented the above procedures in the computer algebra system GAP \cite{gap} and the  following two plugins: SLA \cite{sla}, CoReLG \cite{corelg}. We have also updated a special plugin CKForms \cite{ckforms}. In version 2.0 there are additional functions that implement a-hyperbolic rank (\texttt{AHypRank}). The final database of pairs is based on the data from the CoReLG \cite{corelg} plugin extended with calculations on ranks.
\subsection{Algorithm for checking the Kobayashi criterion}
\begin{algorithm}[H]
  \caption{\tt CheckProperSL2RAction($\mathfrak{g},\mathfrak{h}$)}
  \footnotesize
  \label{alg1}
  \tcc{
  $\mathfrak{g}$ - non-compact real simple Lie algebra, $\mathfrak{h}$ - maximal proper subalgebra. 
   Return true, when corresponding $G/H$ admits a proper action of a subgroup $L\subset G$ locally isomorphic to $\mathfrak{sl}(2,\mathbb{R})$ and return false - otherwise.}
  \Begin{
let $\mathfrak{j}_{\mathfrak{h}}$ be Cartan subalgebra of $\mathfrak{h}^c$\;
let $W$ be Weyl group $\mathfrak{g}^c$ represented by $r\times r$-matrix ($r=\operatorname{rank} \mathfrak{g}^c$)\;
set $C_\mathfrak{h}$ - the basis vectors of $\mathfrak{j}_{\mathfrak{h}}$ in Chevalley basis of $\mathfrak{g}^c$\;
set $Orb$ - the set of nilpotent orbits of $\mathfrak{g}^c$\;
 \ForAll{$o \in Orb$}{
set $h_o$ as vector $H$ from $(h,e,f)$-$SL2$-triple for $o$\;
\If{ each vector from $ W h_0$ is linearly independent with $C_\mathfrak{h}$  }
{\Return{true}}
 }
   \Return{false}\;
  
  }
\end{algorithm} 

The keyword "return"  in the above terminates the algorithm.

%\begin{remark}
%All below space admits a proper action of a subgroup $L\subset G$ locally isomorphic to $\mathfrak{sl}(2,\mathbb{R})$:

%$$(\mathfrak{sl}(6,\mathbb{R}), \mathfrak{sl}(4,\mathbb{R})), \ \ \ \ \ \ (\textrm{Table 38}),$$
%$$(\mathfrak{e}_{6(6)}, \mathfrak{sl}(3,\mathbb{R})\oplus \mathfrak{g}_{2(2)}), \ \ \ \ \ \ (\textrm{Table 74}),$$
%$$(\mathfrak{6}_{(6(-26))}, \mathfrak{sl}(3,\mathbb{C})\oplus \mathfrak{so}(3)), \ (\mathfrak{e}_{6(-26)}, \mathfrak{su}(2) \oplus \mathfrak{sl}(3,\mathbb{H})), \ \ (\textrm{Table 77}),$$
%$$(\mathfrak{sl}(8,\mathbb{R}), \mathfrak{sl}(2,\mathbb{R})\oplus \mathfrak{sl}(4,\mathbb{R})), \ \ \ \ \ \ (\textrm{Table 83}),$$
%$$(\mathfrak{sl}(9,\mathbb{R}), \mathfrak{sl}(3,\mathbb{R})\oplus \mathfrak{sl}(3,\mathbb{R})), \ \ \ \ \ \ (\textrm{Table 110}).$$
%\end{remark}

\vskip20pt
Faculty of Mathematics and Computer Science
\vskip6pt
\noindent University of Warmia and Mazury
\vskip6pt
\noindent S\l\/oneczna 54, 10-710 Olsztyn, Poland
\vskip6pt
\noindent e-mail adresses:
\vskip6pt
mabo@matman.uwm.edu.pl (MB)
\vskip6pt
piojas@matman.uwm.edu.pl (PJ)
\vskip6pt
tralle@matman.uwm.edu.pl (AT)


\begin{thebibliography}{99}
\bibitem{ben} Y. Benoist,  {\it Actions propres sur les espaces homog\`{e}nes r\'{e}ductifs},  Ann. of Math. 144 (1996),  315-347.
\bibitem{b} M. Boche\'nski, {\it Proper actions on strongly regular homogeneous spaces}, Asian J. Math. 21(2017), 1121-1134.
\bibitem{bt} M. Boche\'nski, A. Tralle, {\it Clifford-Klein forms and a-hyperbolic rank}, Int. Math. Res. Notices 5 (2015), 6267-6285.
\bibitem{bjt} M. Boche\'nski, P. Jastrz\c ebski, A. Tralle, {\it Non-existence of standard compact Clifford-Klein forms of homogeneous spaces of exceptional Lie groups}, Math. Comp. 89 (2020), 1487-1499.
\bibitem{ckforms} M. Boche\'nski, P. Jastrz\c ebski, A. Tralle, {\it CKForms}. A GAP Package. Available online (https://pjastr.github.io/CKForms/).
\bibitem{bjstw} M. Boche\'nski, P. Jastrz\c ebski, A. Szczepkowska, A. Tralle, A. Woike, {\it Semisimple subalgebras in simple Lie algebras and a computational approach to the compact Clifford-Klein forms problem}, Exp. Math. 30(2021), 86-94
\bibitem{btjo}  M. Boche\'nski, P. Jastrz\k{e}bski, T. Okuda, A. Tralle, {\it Proper $SL(2,\mathbb{R})$-actions on homogeneous spaces,} Int. J. Math. 27 (2016), No. 13, 1-10. 
\bibitem{cmg} D. H. Collingwood, W. M. McGovern, {\it Nilpotent Orbits in Semisimple Lie Algebras}, Van Nostrand Reinhold, New York (1993).
\bibitem{FH} J. R. Daniel, A. G. Helminck, {\it Algorithms of computations in local symmetric spaces,} Communications in Algebra 36, 1758-1788.
\bibitem{dg1} W. de Graaf, {\it Constructing semisimple subalgebras of semisimple Lie algebras}, J. Algebra 325(2011), 416-430
\bibitem{dg} W. de Graaf, A. Marrani, {\it Real forms of embeddings of maximal reductive subalgebras of the complex simple Lie algebras of rank up to 8,} arXiv:1911.06575.
\bibitem{ddg} H. Dietrich, W. de Graaf, {\it Computing with the real Weyl group}, J. Symb. Comput., to appear, arxiv: 1907.01398
\bibitem {fd} P. Faccin, W. de Graaf, {\it Constructing semisimple subalgebras of real semisimple Lie algebras,} in: Lie Algebras and Related Topics, 75-89, Contemp. Math. 652 (2015), AMS, RI.
\bibitem{ko} T. Kobayashi, T. Yoshino {\it Compact Clifford-Klein forms of symmetric spaces revisited}, Pure Appl. Math. Quart 1 (2005), 603-684.
\bibitem{kod} T. Kobayashi {\it Discontinuous Groups for Non-Riemannian Homogeneous Spaces,} In: Engquist B., Schmid W. (eds) Mathematics Unlimited - 2001 and Beyond. Springer, Berlin 2001.
\bibitem{K89} T. Kobayashi, {\it Proper actions on a homogeneous space of reductive type}, Math. Ann. 285(1989), 249-263
\bibitem{ok} T. Okuda,  {\it Classification of Semisimple Symmetric Spaces with Proper $SL(2,\mathbb{R})$-Actions,}  J. Differential Geometry 2 (2013), 301-342.
\bibitem{ov2} A. L. Onishchik, E. B. Vinberg, {\it Lie groups and algebraic groups,}  Springer (1990).
\bibitem{OV} A. L. Onishchik, E. B. Vinberg {\it Lie Groups and Lie algebras III,}  Springer (1994).

\bibitem{gap} The GAP Group. {\it GAP-Groups, Algorithms and Programming, v.4.10.} Available online (https://www.gap-system.org/). 

\bibitem{corelg} H. Dietrich, P. Faccin, W. A. de Graaf, {\it CoReLG: Computing with Real Lie Groups.} A GAP 4 Package. Available online (http://users.monash.edu/~heikod/corelg/).

\bibitem{sla} W. A. de Graaf, {\it SLA-Computing with Simple Lie Algebras,} A GAP Package. Available online (http://www.science.unitn.it/~degraaf/sla.html).



\end{thebibliography}
\end{document}